\newtheorem{thm}{Theorem}
\newtheorem{prop}[thm]{Proposition}
\newtheorem{lem}[thm]{Lemma}
\newtheorem{cor}[thm]{Corollary}
\newtheorem{prob}{Problem}
\begin{document}

\title[Automorphisms of nonsplit extensions]{Automorphisms of nonsplit extensions\\ of 2-groups by $\operatorname{PSL}_2(q)$}
\author{Danila O. Revin}%
\address{Danila O. Revin
\newline\hphantom{iii} Sobolev Institute of Mathematics,
\newline\hphantom{iii} 4, Koptyug av.
\newline\hphantom{iii} 630090, Novosibirsk, Russia
} \email{revin@math.nsc.ru}

\author{Andrei V. Zavarnitsine}%
\address{Andrei V. Zavarnitsine
\newline\hphantom{iii} Sobolev Institute of Mathematics,
\newline\hphantom{iii} 4, Koptyug av.
\newline\hphantom{iii} 630090, Novosibirsk, Russia
} \email{zav@math.nsc.ru}

\maketitle {\small
\begin{quote}
\noindent{{\sc Abstract.}
We complete the description of automorphism groups of all nonsplit extensions of elementary abelian $2$-groups
by $\operatorname{PSL}_2(q)$, with $q$ odd, for an irreducible induced action.
An application of this result to the theory of $\pi$-submaximal subgroups is given.}

\medskip
\noindent{{\sc Keywords:} Automorphism group, nonsplit extension}

\medskip
\noindent{\sc MSC2010:
20F28, 
20E22, 
20J06  
}
\end{quote}
}

\section{Introduction}

The nonsplit extensions $G$ of
$L=\operatorname{PSL}_2(q)$, $q$ odd, given by the exact sequence
\begin{equation}\label{VGL}
0\to V \to G \to L \to 1,
\end{equation}
where $V$ is an elementary abelian $2$-group and the corresponding action of $L$ on $V$ is irreducible,
were described by V.\,P.\,Burichenko in \cite{00Buri}. In this paper, we find the automorphism group of such an extension and then discuss its application for solving a problem by H.\,Wielandt \cite[Frage (g)]{80Wie}.

Recall that $\operatorname{P\Sigma L}_2(q)$ denotes the extension of $L$ by
its group of field automorphisms. Similarly, $\operatorname{P\Gamma L}_2(q)$ is the extension of $\operatorname{PGL}_2(q)$ by its
field automorphisms.

If the action of $L$ on $V$ is identical (and the dimension of $V$ as a vector space over $\mathbb{F}_2$ equals $1$) then
$G\cong \operatorname{SL}_2(q)$ and the automorphism group of $G$ is known to be isomorphic with $\operatorname{P\Gamma L}_2(q)$.
We may therefore restrict consideration to the case where $L$ acts on $V$ nonidentically.

According to \cite[Theorem 3]{00Buri}, the nonsplit extension $G$ that corresponds to an irreducible and nonidentical action of $L$ on $V$
exists if and only if $q\equiv -1\pmod 4$, in which case $G$ is defined uniquely up to isomorphism and the dimension of $V$ as a vector space
over $\mathbb{F}_2$ equals $(q-1)/2$ if $q\equiv -1\pmod 8$ and $q-1$ if $q\equiv 3\pmod 8$.
In the former case, the existence of an outer automorphism of $L$ induced by $\operatorname{PGL}_2(q)$ allows us to define a representation $L\rightarrow \operatorname{GL}(V)$ in two inequivalent ways which correspond to nonisomorphic absolutely irreducible $\mathbb{F}_2L$-modules of dimension $(q-1)/2$. In the latter case, $V$ is not absolutely irreducible as a $\mathbb{F}_2L$-module and $V \otimes \mathbb{F}_4$ splits into a direct sum of two absolutely irreducible $\mathbb{F}_4L$-submodules which are permuted by  an outer automorphism of~$L$. In both cases, $V$ occurs as a nontrivial composition factor of the natural $(q+1)$-dimensional permutation $\mathbb{F}_2L$ module corresponding to the permutation action of $L$ in the projective line $\mathbb{P}^1_q$.

\begin{thm}\label{main}
Suppose that the extension $G$ given by $(\ref{VGL})$ is nonsplit, where $V$ is an elementary abelian $2$-group on which $L\cong \operatorname{PSL}_2(q)$, $q$ odd, acts irreducibly and nonidentically.
Then there is a short exact sequence of groups
$$
0\to W \to \operatorname{Aut}(G) \to A \to 1,
$$
where $W$ is an elementary abelian $2$-group of order $2^n$ and one of the following holds:
\begin{enumerate}
  \item[$(i)$] $q\equiv -1\pmod 8$, $n=(q+1)/2$, and $A\cong\operatorname{P\Sigma L}_2(q)$;
  \item[$(ii)$] $q\equiv 3\pmod 8$, $n=q+1$, and $A\cong\operatorname{P\Gamma L}_2(q)$.
\end{enumerate}
\end{thm}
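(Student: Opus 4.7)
The setup calls for the standard theory of automorphisms of group extensions. Since $L$ is nonabelian simple, $L$ has no nontrivial normal $2$-subgroup, so any normal $2$-subgroup of $G$ lies in $V$; hence $V=O_2(G)$ is characteristic in $G$. Restriction therefore yields a homomorphism $\rho\colon\operatorname{Aut}(G)\to\operatorname{Aut}(L)$; put $A=\operatorname{im}\rho$ and $W=\ker\rho$. An automorphism $\alpha\in W$ sends each $\tilde g\in G$ to $\tilde g\cdot d_\alpha(\bar g)$ for a well-defined map $d_\alpha\colon L\to V$ which is a $1$-cocycle, and every $1$-cocycle arises this way. Hence $W\cong Z^1(L,V)$, an elementary abelian $2$-group of order $|V|\cdot|H^1(L,V)|$, using $B^1(L,V)\cong V/V^L=V$ (invariants vanish by irreducibility and nontriviality of the action).

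The main task is to determine $\dim_{\mathbb{F}_2}H^1(L,V)$. I would realize $V$ as a composition factor of the permutation $\mathbb{F}_2L$-module $P=\operatorname{Ind}_B^L\mathbb{F}_2$ of the action of $L$ on $\mathbb{P}^1_q$, via the filtration $0\subset P_1\subset P_0\subset P$ with $P_1=\langle\mathbf{1}\rangle$ and $P_0$ the augmentation submodule. Both $P_1$ and $P/P_0$ are trivial modules, while the composition factors of $P_0/P_1$ are $V$ alone in case~(ii) and the pair $V$, $V^\delta$ (with $\delta$ the diagonal outer automorphism) in case~(i). Shapiro's lemma gives $H^n(L,P)\cong H^n(B,\mathbb{F}_2)$, which vanishes for $n\ge 1$ because $|B|=q(q-1)/2$ is odd when $q\equiv -1\pmod 4$. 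Combined with $H^1(L,\mathbb{F}_2)=0$ (perfectness) and $H^2(L,\mathbb{F}_2)\cong\mathbb{F}_2$ (Schur multiplier; here $q\ne 9$), the long exact sequences of the two short exact sequences in the filtration yield $\dim H^1(L,V)=1$ in case~(i) and $\dim H^1(L,V)=2$ in case~(ii), giving $|W|=2^{(q+1)/2}$ and $|W|=2^{q+1}$ respectively.

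It remains to identify $A$. Since $\operatorname{Inn}(G)\le\operatorname{Aut}(G)$ maps via $\rho$ onto $\operatorname{Inn}(L)\cong L$, one has $L\le A\le\operatorname{Aut}(L)=\operatorname{P\Gamma L}_2(q)$. An outer class $\sigma\in\operatorname{Out}(L)$ lies in $A/L$ iff the extension $G$ twisted by $\sigma$ is equivalent to $G$; by Burichenko's uniqueness result \cite[Theorem~3]{00Buri}, this is equivalent to $V^\sigma\cong V$ as $\mathbb{F}_2L$-modules. Field automorphisms preserve $V$ automatically. The diagonal $\delta$ swaps the two inequivalent $(q-1)/2$-dimensional modules in case~(i) (so $\delta\notin A$) but fixes the unique $(q-1)$-dimensional module in case~(ii) (so $\delta\in A$); this yields $A\cong\operatorname{P\Sigma L}_2(q)$ and $A\cong\operatorname{P\Gamma L}_2(q)$ respectively. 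The most delicate point is the $H^1$ calculation in case~(i): after showing $\dim H^1(L,P_0/P_1)=2$, one must exploit the $\delta$-symmetry (together with whatever structural information ensures $H^1$ splits evenly between $V$ and $V^\delta$) to conclude $\dim H^1(L,V)=1$.
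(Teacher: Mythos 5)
Your skeleton matches the paper's (the paper also gets $W\cong Z^1(L,V)$ and a quotient inside $\operatorname{Aut}(L)$ via Robinson's exact sequence for $\operatorname{Aut}(\bm e)$), and case $(i)$ essentially goes through: there $V$ is absolutely irreducible over $\mathbb F_2$, so $C_{\operatorname{Aut}(V)}(\mathcal C(L))=1$ and $H^2(L,V)\cong\mathbb F_2$, which makes both of your glossed-over steps automatic. The $H^1$ computation you flag as delicate is in fact fine (and is simply imported from the literature in the paper): $\delta$ interchanges $U_+$ and $U_-$, so $H^1(L,U_-)=H^1(L,U_+^{\delta})\cong H^1(L,U_+)$ and the $2$-dimensional $H^1(L,U)$ splits evenly. (Separately, your argument assumes $L$ is simple and the coupling faithful; for $q=3$ the action of $A_4$ on $\mathbb F_2^2$ is not faithful and the paper handles this case by direct computation.)

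The genuine gaps are both in case $(ii)$, and both come from ignoring the centraliser $Z=C_{\operatorname{Aut}(V)}(\mathcal C(L))$ and its action on the extension class. Since $V$ is irreducible but not absolutely irreducible, $Z\cong\mathbb F_4^{\times}\cong\mathbb Z_3$, so: (1) an automorphism in $\ker\rho$ induces the identity on $G/V$ but a priori acts on $V$ by an element of $Z$; your formula $\tilde g\mapsto\tilde g\cdot d_\alpha(\bar g)$ with $d_\alpha$ factoring through $L$ silently assumes $\alpha|_V=1$, so $W\cong Z^1(L,V)$ (rather than an extension of $\mathbb Z_3$ by $Z^1$) is not yet proved. (2) Your criterion ``$\sigma\in A$ iff $V^{\sigma}\cong V$'' does not follow from Burichenko's uniqueness: that theorem gives an abstract isomorphism between $G$ and the twisted extension, which induces \emph{some} pair in $\operatorname{Aut}(V)\times\operatorname{Aut}(L)$ carrying one class to the other, but gives no control over which automorphism of $L$ it induces; what is actually needed is that a compatible lift of $\sigma$ to $N_{\operatorname{Aut}(V)}(\mathcal C(L))$ stabilises $\overline\varphi\in H^2(L,V)\cong\mathbb F_2^2$. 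Both points are settled by the same computation, which is the heart of the paper's case $(ii)$: $H^2(L,V)$ is a one-dimensional $\mathbb F_4$-space on which $Z$ acts by scalars, hence $Z$ permutes the three nonzero classes in a $3$-cycle (giving $N^{\overline\varphi}\cap Z=1$, hence (1)), and the image of $N$ in $\operatorname{Sym}_3$ is exactly this $3$-cycle because $N$ has no quotient isomorphic to $S_3$ (giving $N=N^{\overline\varphi}Z$, hence every element of $I_{\operatorname{Aut}(L)}(\chi)=\operatorname{P\Gamma L}_2(q)$ lifts, i.e. (2)). Without some form of this argument the proof of case $(ii)$ is incomplete.
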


In the last section, we discuss applications of this result to the theory of $\pi$-submaximal subgroups.
Subsequently, the results of this paper will allow us to classify $\pi$-submaximal subgroups of minimal nonsolvable groups whose Frattini subgroup is the minimal normal $2$-subgroup.

\section{Preliminaries}

Let $G$ be a finite group.
Fix a field $F$. Denote by $\operatorname{iBr}_F(G)$ the set\footnote{The Brauer character of an irreducible $F$-representation can be defined
as the sum of Brauer characters of its absolutely irreducible components over a splitting extension of $F$. In the absolutely irreducible case and
prime characteristic $p$, we may assume by \cite[Lemma (9.13)]{06Isa} that $F$ is a subfield of $\overline{\mathbb{F}}_p=\overline{\mathbb{Z}}/M$
for a fixed ideal $M$ of the ring of algebraic integers $\overline{\mathbb{Z}}$ containing the prime~$p$.}
of Brauer characters of all inequivalent irreducible $F$-representations of $G$.
Note that if $F$ is a splitting field then $\operatorname{iBr}_F(G)$ coincides with $\operatorname{irr}(G)$ or $\operatorname{iBr}_p(G)$ according as the
characteristic of $F$ is $0$ or $p>0$.

\begin{lem}\label{dist} The characters in $\operatorname{iBr}_F(G)$ are pairwise distinct.
\end{lem}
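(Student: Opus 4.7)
The plan is to pass to a splitting field and reduce to the linear independence of absolutely irreducible Brauer characters. By the footnote's convention, we may choose a Galois extension $F'/F$ contained in the relevant algebraic closure (either $\overline{\mathbb{F}}_p$ in characteristic $p$ or $\overline{\mathbb{Q}}$ in characteristic $0$) so that $F'$ is a splitting field for $G$ and every irreducible $F$-representation of $G$ already splits into absolutely irreducible components after base change to $F'$. Put $\Gamma=\operatorname{Gal}(F'/F)$.

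The structural input is the following standard fact from the Galois theory of representations: for any irreducible $F$-representation $\rho$ of $G$, the set $O_\rho\subseteq\operatorname{iBr}_{F'}(G)$ of isomorphism classes of absolutely irreducible constituents of $\rho\otimes_F F'$ is a single $\Gamma$-orbit, and every constituent occurs with the same multiplicity $m_\rho\geq 1$ (the Schur index). Consequently the Brauer character of $\rho$, defined as the sum of the Brauer characters of the absolutely irreducible components of $\rho\otimes_F F'$, satisfies
$$
\varphi_\rho \;=\; m_\rho\sum_{\chi\in O_\rho}\chi.
$$
If $\rho_1$ and $\rho_2$ are inequivalent irreducible $F$-representations, then the orbits $O_{\rho_1}$ and $O_{\rho_2}$ must be disjoint: otherwise $O_{\rho_1}=O_{\rho_2}$ as $\Gamma$-orbits, and Galois descent would force $\rho_1\cong\rho_2$, a contradiction. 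Hence $\varphi_{\rho_1}$ and $\varphi_{\rho_2}$ are nonzero nonnegative integer combinations of elements of $\operatorname{iBr}_{F'}(G)$ with disjoint supports.

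To finish I would invoke the classical linear independence of the absolutely irreducible Brauer characters in $\operatorname{iBr}_{F'}(G)$ as $\mathbb{C}$-valued functions on the $p$-regular conjugacy classes (in the modular case) or on all conjugacy classes (in characteristic zero). Since $\varphi_{\rho_1}$ and $\varphi_{\rho_2}$ are supported on disjoint, nonempty subsets of a linearly independent set with positive integer coefficients, they cannot coincide. The only point requiring care, which I view as the main obstacle, is the clean justification of the single-orbit-with-uniform-multiplicity decomposition in the modular setting; this is standard (see, e.g., Curtis--Reiner) and can be invoked as background, the rest of the argument being entirely formal.
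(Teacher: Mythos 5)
Your proof is correct and follows essentially the same route as the paper: base change to a splitting field, observe that inequivalent irreducible $F$-representations have disjoint sets of absolutely irreducible constituents (the paper cites \cite[Theorem (9.21)]{06Isa} for this, which also gives the uniform multiplicity you worry about), and conclude from the linear independence of $\operatorname{iBr}(G)$ over $\mathbb{C}$. The Galois-orbit refinement you invoke is not needed --- disjointness of supports alone suffices --- but it does no harm.
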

\begin{proof}
Let $E\supseteq F$ be a splitting extension for $G$. By \cite[Theorem (9.21)]{06Isa}, the characters in $\operatorname{iBr}_F(G)$ are nonzero multiples of
sums of pairwise disjoint subsets of $\operatorname{iBr}_E(G)=\operatorname{iBr}(G)$. The claim follows, since the set $\operatorname{iBr}(G)$
is linearly independent over $\mathbb{C}$ by \cite[Theorem (15.5)]{06Isa}.
\end{proof}

Let $A$ be a finite group and let $G\trianglelefteqslant A$.
For $\chi\in \operatorname{iBr}_F(G)$ and $a\in A$,
the conjugate character $\chi^a$ is defined by $\chi^a(g^a)=\chi(g)$, for all $g\in G$. Clearly, $\chi^a\in \operatorname{iBr}_F(G)$.
The corresponding {\em inertia subgroup} is denoted by
$$
I_A(\chi)=\{a\in A\mid \chi^a=\chi\}.
$$
We say that $\chi$ is {\em $A$-invariant} if $I_A(\chi)=A$.
If $\operatorname{Z}(G)=1$ then $G\trianglelefteqslant \operatorname{Aut}(G)$, and we may thus speak of the group $I_{\operatorname{Aut(G)}}(\chi)$.

The following result generalises \cite[Theorem 11.2]{06Isa} and \cite[Theorem 8.14]{98Nav} to arbitrary fields and representations that are irreducible
but, possibly, not absolutely.

\begin{prop}\label{nav} Let $F$ be a field, $G\trianglelefteqslant A$, and let $\mathcal{X}$ be
an irreducible $F$-represen\-ta\-tion of $G$ of degree $n$ affording $\chi\in \operatorname{iBr}_F(G)$. Suppose that $\chi$ is $A$-invariant.
Denote $Z=C_{\operatorname{GL}_n(F)}(\mathcal{X}(G))$. Then there exist mappings $\mathcal{Y}:A\to \operatorname{GL}_n(F)$
and $\alpha: A\times A\to Z$ such that, for all  $g\in G$, $a,b\in A$, we have
\begin{enumerate}
  \item[$(i)$] $\mathcal{Y}(g)=\mathcal{X}(g)$;
  \item[$(ii)$] $\mathcal{X}(g)^{\mathcal{Y}(a)} = \mathcal{X}(g^a)$;
  \item[$(iii)$] $\mathcal{Y}(a)\mathcal{Y}(b)=\alpha(a,b)\mathcal{Y}(ab)$;
  \item[$(iv)$] $\alpha(a,g)=\alpha(g,a)=1$.
\end{enumerate}
\end{prop}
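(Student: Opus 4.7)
The plan is to imitate the standard argument of \cite[Theorem~11.2]{06Isa}, replacing the absolute-irreducibility input by Lemma~\ref{dist}. First, I would use the $A$-invariance of $\chi$, together with Lemma~\ref{dist}, to produce for each $a\in A$ a matrix $M(a)\in\operatorname{GL}_n(F)$ satisfying condition~(ii). Concretely, the conjugated map $\mathcal{X}^a\colon g\mapsto \mathcal{X}(g^a)$ is an irreducible $F$-representation of $G$ whose Brauer character at $g$ equals $\chi(g^a)=\chi(g)$ by $A$-invariance, so $\mathcal{X}^a$ affords the same element of $\operatorname{iBr}_F(G)$ as $\mathcal{X}$; by Lemma~\ref{dist} the two representations must be equivalent. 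Any intertwiner $M(a)$ then satisfies $M(a)^{-1}\mathcal{X}(g)M(a)=\mathcal{X}(g^a)$ for all $g\in G$, which is exactly~(ii); moreover, two such intertwiners differ by right multiplication by an element of~$Z$, since $Z$ acts simply transitively on the set of intertwiners between any two equivalent irreducible $F$-representations.

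Next, I would fix a right transversal $T$ of $G$ in $A$ containing~$1$, choose for each $t\in T$ an intertwiner $\mathcal{Y}(t)\in\operatorname{GL}_n(F)$ meeting~(ii), with the normalization $\mathcal{Y}(1)=I_n$, and extend the definition to all of $A$ by the formula $\mathcal{Y}(a)=\mathcal{X}(g)\mathcal{Y}(t)$ whenever $a=gt$ with $g\in G$ and $t\in T$. Property~(i) is immediate, and property~(ii) for general $a\in A$ follows by a direct computation combining the intertwining identity for $\mathcal{Y}(t)$ with the observation that conjugation by $\mathcal{X}(g)$ already realises on $\mathcal{X}(G)$ the inner automorphism of $G$ corresponding to~$g$.

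For~(iii), I would note that, for any $a,b\in A$, the matrices $\mathcal{Y}(a)\mathcal{Y}(b)$ and $\mathcal{Y}(ab)$ both intertwine $\mathcal{X}$ with $\mathcal{X}^{ab}$, so by the uniqueness established above they agree up to a single element $\alpha(a,b)\in Z$. Property~(iv) then reduces to the two identities $\mathcal{X}(g)\mathcal{Y}(a)=\mathcal{Y}(ga)$ and $\mathcal{Y}(a)\mathcal{X}(g)=\mathcal{Y}(ag)$ for $g\in G$ and $a\in A$; both drop out of the explicit formula $\mathcal{Y}(a)=\mathcal{X}(g_0)\mathcal{Y}(t)$ (with $a=g_0 t$) together with the commutation relation $\mathcal{Y}(t)\mathcal{X}(g)=\mathcal{X}(g^{t^{-1}})\mathcal{Y}(t)$ derived from~(ii) applied to~$t$.

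The only step at which the absence of absolute irreducibility could make a genuine difference is the passage from ``same Brauer character'' to ``equivalent representations''; in the classical setting this is standard, while in our generality it is exactly what Lemma~\ref{dist} supplies. Once that ingredient is available, the rest of the construction is a direct transcription of the usual argument for ordinary or absolutely irreducible modular representations, the only bookkeeping change being that $\alpha$ now takes values in the (possibly nonabelian) unit group~$Z$ of the endomorphism division algebra rather than in~$F^{\times}$.
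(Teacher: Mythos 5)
Your proposal is correct and follows essentially the same route as the paper: both use Lemma~\ref{dist} to upgrade ``same Brauer character'' to ``equivalent representations,'' fix a transversal with a normalized choice of intertwiners, extend by $\mathcal{Y}(gt)=\mathcal{X}(g)P_t$, and verify (i)--(iv) by the standard computations, with $\alpha$ landing in $Z$ by Schur's lemma. The only cosmetic difference is that you deduce $\alpha(a,b)\in Z$ from the torsor structure on intertwiners, while the paper first proves (ii) and then checks directly that $\alpha(a,b)$ centralizes $\mathcal{X}(G)$; these are equivalent.
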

\begin{proof}
We follow the proof of \cite[Theorem 11.2]{06Isa}. By assumption, for every $a\in A$, the irreducible $F$-representations $\mathcal{X}$
and $\mathcal{X}^a$ have the same Brauer character $\chi \in \operatorname{iBr}_F(G)$. Hence, they are equivalent by Lemma \ref{dist}.
In particular, there exists $P_a\in \operatorname{GL}_n(F)$ such that $P_a\mathcal{X}P_a^{-1}=\mathcal{X}^a$.

We choose a transversal $T$  for $G$ in $A$ with $1\in T$ and $P_1$ the identity matrix. Every $a\in A$ is uniquely of the form $a=gt$ for $g\in G$
and $t\in T$. We set $\mathcal{Y}(gt)=\mathcal{X}(g)P_t$ and $\alpha(a,b)=\mathcal{Y}(a)\mathcal{Y}(b)\mathcal{Y}(ab)^{-1}$ for all $a,b\in A$.
This will prove $(iii)$ once we show that $\alpha(A\times A)\subseteq Z$.

By definition, we have $\alpha(g,a)=1$ for all $g\in G$ and $a\in A$. In particular, $(i)$ holds.
We also have
$$
\mathcal{Y}(gt)\mathcal{Y}(h)=\mathcal{X}(g)P_t\mathcal{X}(h)=\mathcal{X}(g)\mathcal{X}^t(h)P_t=\mathcal{X}(g\cdot tht^{-1})P_t=\mathcal{Y}(gt\cdot h)
$$
for $h\in G$. Hence, $\alpha(a,g)=1$ for all $g\in G$, $a\in A$ and $(iv)$ holds.

Now, we have
$$
\mathcal{Y}(g)\mathcal{Y}(a)=\mathcal{Y}(ga)=\mathcal{Y}(a\cdot g^a)=\mathcal{Y}(a)\mathcal{Y}(g^a),
$$
which yields $(ii)$. Therefore,
$$
\mathcal{X}(g)^{\mathcal{Y}(a)\mathcal{Y}(b)}=\mathcal{X}(g^a)^{\mathcal{Y}(b)}=\mathcal{X}(g^{ab})=\mathcal{X}(g)^{\mathcal{Y}(ab)}
$$
for all $g\in G$, $a,b\in A$. Hence, $\alpha(a,b)=\mathcal{Y}(a)\mathcal{Y}(b)\mathcal{Y}(ab)^{-1}$ commutes with $\mathcal{X}(G)$ and
so lies in $Z$. The claim holds.
\end{proof}

\begin{prop} \label{normim}
Let $\mathcal{X}$ be a faithful irreducible $F$-representation of $G$ of degree $n$ affording $\chi\in\operatorname{iBr}_F(G)$.
Suppose $\operatorname{Z}(G)=1$ and denote $N=N_{\operatorname{GL}_n(F)}(\mathcal{X}(G))$ and $Z=C_{\operatorname{GL}_n(F)}(\mathcal{X}(G))$. Then
$N/Z\cong I_{\operatorname{Aut}(G)}(\chi).$
\end{prop}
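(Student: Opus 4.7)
The plan is to construct a natural homomorphism $\varphi\colon N\to\operatorname{Aut}(G)$, to compute its kernel, and then to identify its image with $I_{\operatorname{Aut}(G)}(\chi)$. Since $\mathcal{X}$ is faithful, I identify $G$ with $\mathcal{X}(G)$; then for each $y\in N$ conjugation by $y$ is an automorphism of $\mathcal{X}(G)$, which pulls back by $\mathcal{X}$ to an automorphism $\varphi(y)$ of $G$. The map $\varphi$ is plainly a homomorphism, and its kernel is the centraliser of $\mathcal{X}(G)$ in $\operatorname{GL}_n(F)$, namely $Z$. Thus the claim reduces to determining $\operatorname{im}\varphi$.

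For the easy inclusion $\operatorname{im}\varphi\subseteq I_{\operatorname{Aut}(G)}(\chi)$: if $\sigma=\varphi(y)$ for some $y\in N$, then the two $F$-representations $\mathcal{X}$ and $g\mapsto\mathcal{X}(g^\sigma)$ of $G$ are conjugate inside $\operatorname{GL}_n(F)$ via $y$, hence equivalent, and therefore afford the same Brauer character. Unwinding the definition $\chi^\sigma(g^\sigma)=\chi(g)$, this yields $\chi^\sigma=\chi$, so $\sigma\in I_{\operatorname{Aut}(G)}(\chi)$.

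The substantive step is surjectivity onto $I_{\operatorname{Aut}(G)}(\chi)$, and here I would invoke Proposition~\ref{nav}. Because $\operatorname{Z}(G)=1$ we have $G\cong\operatorname{Inn}(G)\trianglelefteqslant\operatorname{Aut}(G)$; because $\chi$ is a class function, each inner automorphism fixes it, so $G$ sits inside $A:=I_{\operatorname{Aut}(G)}(\chi)$ as a normal subgroup, and $\chi$ is $A$-invariant by construction. Proposition~\ref{nav} then supplies a map $\mathcal{Y}\colon A\to\operatorname{GL}_n(F)$ with $\mathcal{X}(g)^{\mathcal{Y}(a)}=\mathcal{X}(g^a)$ for all $g\in G$ and $a\in A$. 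A single reading of this relation shows simultaneously that $\mathcal{Y}(a)$ normalises $\mathcal{X}(G)$, i.e.\ $\mathcal{Y}(a)\in N$, and that $\varphi(\mathcal{Y}(a))=a$. The first isomorphism theorem then delivers $N/Z\cong I_{\operatorname{Aut}(G)}(\chi)$.

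The only real obstacle is precisely this last step: equality of Brauer characters does not by itself produce a conjugating matrix, and bridging that gap is exactly what Proposition~\ref{nav} accomplishes (resting in turn on Lemma~\ref{dist} to convert equal Brauer characters into equivalent representations). Once that bridge is in place, the rest of the argument is routine bookkeeping.
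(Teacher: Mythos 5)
Your proposal is correct and follows essentially the same route as the paper: the same conjugation homomorphism $N\to\operatorname{Aut}(G)$ with kernel $Z$, the same easy inclusion of the image into $I_{\operatorname{Aut}(G)}(\chi)$ (the paper phrases it via traces, you via equivalence of representations — an immaterial difference), and the same appeal to Proposition~\ref{nav} applied to $G\trianglelefteqslant I_{\operatorname{Aut}(G)}(\chi)$ for surjectivity. No gaps.
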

\begin{proof}

Since $\mathcal{X}$ is faithful, we have $G\cong \mathcal{X}(G)$. We identify every $g\in G$ with $\mathcal{X}(g)$.
The conjugation by elements of $N$ induces automorphisms of $\mathcal{X}(G)$ and hence of $G$. Thus, we can define a homomorphism $\overline{\phantom{x}}:N\rightarrow \operatorname{Aut}(G)$ such that $$\mathcal{X}(g)^a=\mathcal{X}(g^{\overline{a}}).$$
for all $a\in N$ and $g\in G$. Clearly, the kernel of this homomorphism coincides with~$Z$.

Let $I=I_{\operatorname{Aut}(G)}(\chi)$. It is sufficient to show that $I=\overline{N}$. If $a\in N$ then we have
$$
\chi^{\overline{a}}(g^{\overline{a}})=\chi(g)=\operatorname{tr}(\mathcal{X}(g))=\operatorname{tr}(\mathcal{X}(g)^a)\\
=\operatorname{tr}(\mathcal{X}(g^{\overline{a}}))=\chi(g^{\overline{a}})
$$
for every $g\in G$. Thus, $\overline{a}\in I$. It remains to show that every element in $I$ is the image of some element in~$N$.

Since $\operatorname{Z}(G)=1$, we may view $G$ as a normal subgroup of $\operatorname{Aut}(G)$. Clearly, $G\trianglelefteqslant I$. Because $\chi$ is $I$-invariant, Proposition \ref{nav} implies that there is a map $\mathcal{Y}:I\to \operatorname{GL}_n(F)$
that extends $\mathcal{X}$ and satisfies $(ii)$--$(iv)$. Statement $(ii)$ implies also that $\mathcal{Y}(I)\subseteq N$ and, for every $b\in I$ and $g\in G$, we have
$$
\mathcal{X}(g^b)=\mathcal{X}(g)^{\mathcal{Y}(b)} = \mathcal{X}\big(g^{\overline{\mathcal{Y}(b)}}\big).
$$
Therefore, $b=\overline{\mathcal{Y}(b)}$ for every $b\in I$ and the claim follows.
\end{proof}

The following particular case is worth mentioning.

\begin{cor} In the notation of Proposition $\ref{normim}$,
if the representation $\mathcal{X}$ can be extended
to $I_{\operatorname{Aut(G)}}(\chi)$ then $N\cong Z\times I_{\operatorname{Aut(G)}}(\chi)$.
\end{cor}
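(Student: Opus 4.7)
The plan is to use the extension $\widetilde{\mathcal{X}} : I \to \operatorname{GL}_n(F)$ of $\mathcal{X}$, where $I := I_{\operatorname{Aut}(G)}(\chi)$, to split the exact sequence $1 \to Z \to N \to I \to 1$ provided by Proposition \ref{normim}, and then to argue that this splitting actually makes $N$ into a direct product.

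First I would check that $\widetilde{\mathcal{X}}(I) \subseteq N$ and gives a section of the surjection $N \twoheadrightarrow I$. Since $\widetilde{\mathcal{X}}$ is a homomorphism restricting to $\mathcal{X}$ on $G$, for any $a \in I$ and $g \in G$ we have
$$
\widetilde{\mathcal{X}}(a)\,\mathcal{X}(g)\,\widetilde{\mathcal{X}}(a)^{-1} \;=\; \widetilde{\mathcal{X}}(aga^{-1}) \;=\; \mathcal{X}(aga^{-1})\in\mathcal{X}(G),
$$
so $\widetilde{\mathcal{X}}(a) \in N$, and moreover the same identity shows that the map $\overline{\phantom{x}} : N \to \operatorname{Aut}(G)$ constructed in the proof of Proposition \ref{normim} sends $\widetilde{\mathcal{X}}(a)$ back to $a$.

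Next, to prove $\widetilde{\mathcal{X}}(I)\cap Z = 1$, suppose $\widetilde{\mathcal{X}}(a) \in Z$. Then the displayed identity gives $\mathcal{X}(aga^{-1}) = \mathcal{X}(g)$ for every $g \in G$, and since $\mathcal{X}$ is faithful, $a$ centralizes $G$. As $\operatorname{Z}(G) = 1$ forces $C_{\operatorname{Aut}(G)}(G) = 1$, we conclude $a = 1$. Combined with $|N| = |Z|\cdot |I|$ from Proposition \ref{normim}, this yields $N = Z\cdot\widetilde{\mathcal{X}}(I)$ with trivial intersection, so $N$ is already a semidirect product $Z\rtimes\widetilde{\mathcal{X}}(I)$.

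The main obstacle, in full generality, is the final step: showing that $\widetilde{\mathcal{X}}(I)$ centralizes $Z$, so that the semidirect product collapses to a direct one. Conjugation by $\widetilde{\mathcal{X}}(a)$ induces an action on the division algebra $D = \operatorname{End}_{FG}(V)$ (whose unit group is $Z$), and this action is not obviously trivial. In the situation most relevant to the main theorem, however, $\mathcal{X}$ is absolutely irreducible over $F$, so $D = F$ and $Z$ consists of scalar matrices; these lie in the centre of $\operatorname{GL}_n(F)$, the commutativity with $\widetilde{\mathcal{X}}(I)$ is automatic, and the desired isomorphism $N \cong Z \times I$ follows at once.
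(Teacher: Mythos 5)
Your construction is exactly the paper's argument, written out in full: the paper's entire proof consists of observing that the image of the extension is a subgroup of $N$ isomorphic to $I=I_{\operatorname{Aut}(G)}(\chi)$ meeting $Z$ trivially, after which ``the claim follows from Proposition \ref{normim}''. Your verifications that $\widetilde{\mathcal{X}}(I)\subseteq N$, that it is a section of $N\twoheadrightarrow I$, and that $\widetilde{\mathcal{X}}(I)\cap Z=1$ (via faithfulness of $\mathcal{X}$ and triviality of $C_{\operatorname{Aut}(G)}(G)$) are correct and supply the details the paper omits; combined with $|N|=|Z|\,|I|$ and $Z\trianglelefteqslant N$ they give $N=Z\rtimes\widetilde{\mathcal{X}}(I)$.

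The obstacle you isolate at the end is genuine, and you are right not to wave it away: nothing in Proposition \ref{normim} or in the existence of the extension forces $\widetilde{\mathcal{X}}(I)$ to centralise $Z$, and the paper's one-line proof is silent on precisely this point. Indeed the product can fail to be direct exactly where you suspect, namely when $\mathcal{X}$ is not absolutely irreducible. Take $L=\operatorname{PSL}_2(q)$ with $q\equiv 3\pmod 8$, $q>3$, $F=\mathbb{F}_2$ and $V=U$ of dimension $q-1$, as in case $(ii)$ of the main theorem: here $D\cong\mathbb{F}_4$, $Z\cong\mathbb{Z}_3$, and $\mathcal{X}$ extends to $I=\operatorname{P\Gamma L}_2(q)$ (reduced permutation module), yet every $\nu\in N$ inducing the diagonal automorphism $\delta$ must act on $D$ as the Frobenius --- a $D$-linear such $\nu$ would yield $U_+\cong U_+^{\delta}\cong U_-$ as $\mathbb{F}_4L$-modules, contradicting $\chi_+^\delta=\chi_-\neq\chi_+$ --- and hence inverts $Z$. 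Thus $Z$ is not central in $N$, whereas in $\mathbb{Z}_3\times\operatorname{P\Gamma L}_2(q)$ every normal subgroup of order $3$ is central; so $N\not\cong Z\times I$. What is actually provable in general is $N\cong Z\rtimes I$; the direct product requires the extra hypothesis that $\mathcal{X}$ be absolutely irreducible, in which case $Z$ consists of scalars and your final paragraph closes the argument. None of this touches the main theorem, since the corollary is not invoked there: $Z=1$ in case $(i)$, and case $(ii)$ only needs $N_0\cong N/Z$, which already follows from a semidirect decomposition.
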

\begin{proof} If $\mathcal{X}$ can be extended to $I_{\operatorname{Aut(G)}}(\chi)$ then the image of such an extension is a subgroup of  $N$ isomorphic to $I_{\operatorname{Aut(G)}}(\chi)$ which intersects trivially with $Z$. The claim follows from Proposition \ref{normim}.
\end{proof}

We observe that an extension of $\mathcal{X}$ to $I_{\operatorname{Aut(G)}}(\chi)$ in the hypothesis of Proposition~\ref{normim} is not always possible as shows the following

\medskip\noindent
{\bf Example 1.} Let $G$ be the alternating group $A_6$ and let $\mathcal{X}$  be the irreducible ordinary representation of $G$ of degree $10$ with character $\chi$. Then $G$ and $\mathcal{X}$ satisfy the hypothesis of Proposition \ref{normim}, yet $\mathcal{X}$ cannot be extended to $I_{\operatorname{Aut(G)}}(\chi)$. Indeed,
the values of $\chi$ on representatives of the conjugacy classes of $G$ are as follows:
$$
\begin{tabular}{c|ccccccc}
                           & $1a$  & $2a$ & $3a$ & $3b$ & $4a$  & $5a$   & $5b$  \\ [2pt]
   \hline
  $\chi$  & $10$  & $-2$ & $1$ &  $1$ & $0$ & $0$ & $0$  \
\end{tabular}
$$
The two nontrivial orbits of $\operatorname{Aut(G)}$ on the conjugacy classes of $G$ consist of the classes of elements of orders $3$ and $5$.
Since $\chi$ is constant on these classes, we have $I_{\operatorname{Aut(G)}}(\chi)=\operatorname{Aut(G)}$. The character table of $\operatorname{Aut}(G)$ which is available in \cite{GAP4} shows that the irreducible characters of $\operatorname{Aut}(G)$ of degree $10$ have value $2$ on the involutions of $A_6$, whereas $\chi$ has value $-2$. Thus, $\mathcal{X}$ does not extend to $\operatorname{Aut}(G)$. Note that $\mathcal{X}$ does extend to each of the three subgroups of $\operatorname{Aut}(G)$ of index $2$.
\medskip

We will also require the following result.

\begin{prop} \label{cen} Let $\mathcal{X}$ be an irreducible $F$-representation of a group $G$ of degree $n$ and let $\mathcal{Y}$
be an irreducible constituent of $\mathcal{X}^E$, where $E\supseteq F$ is a splitting field. Let $m$ be the multiplicity of $\mathcal{Y}$
in $\mathcal{X}^E$, let $D$ be the centraliser of $\mathcal{X}(FG)$ in the matrix algebra $\operatorname{M}_n(F)$, and let $d=\operatorname{dim}_F(D)$.
Then $d=mn/\deg (\mathcal{Y})$.
\end{prop}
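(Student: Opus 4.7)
My plan is to combine Schur's lemma with a Galois descent argument, computing $\dim_F D$ after base change to the splitting field $E$.

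First, I would decompose $\mathcal{X}^E$ as an $EG$-representation. Write
$$
\mathcal{X}^E \;\cong\; m_1\mathcal{Y}_1 \oplus \cdots \oplus m_k\mathcal{Y}_k,
$$
where the $\mathcal{Y}_i$ are pairwise inequivalent absolutely irreducible $E$-representations of $G$, with $\mathcal{Y}_1=\mathcal{Y}$ and $m_1=m$. Since each $\mathcal{Y}_i$ is absolutely irreducible, Schur's lemma identifies the centraliser in the $EG$-setting as a product of matrix algebras:
$$
C_{\operatorname{M}_n(E)}\bigl(\mathcal{X}^E(EG)\bigr) \;\cong\; \prod_{i=1}^k \operatorname{M}_{m_i}(E),
$$
whose $E$-dimension is $\sum_i m_i^2$. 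Since centralisers are compatible with base change, $D\otimes_F E \cong C_{\operatorname{M}_n(E)}(\mathcal{X}^E(EG))$, so $d=\dim_F D = \sum_i m_i^2$. On the other hand, counting degrees gives $n=\sum_i m_i\deg(\mathcal{Y}_i)$.

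The crux of the proof is to show that all the $m_i$ are equal (to $m$) and that all the $\mathcal{Y}_i$ have the same degree $\deg(\mathcal{Y})$. To see this, enlarge $E$ if necessary so that $E/F$ is Galois (this does not change the decomposition type since $E$ is already a splitting field, and the new constituents would just be copies of the old $\mathcal{Y}_i$). The Galois group $\Gamma=\operatorname{Gal}(E/F)$ acts on the set of isomorphism classes of irreducible $EG$-modules. Because $\mathcal{X}$ is defined over $F$, the module $\mathcal{X}^E$ is $\Gamma$-stable, so $\Gamma$ permutes the $\mathcal{Y}_i$ while preserving the multiplicities $m_i$. If the $\mathcal{Y}_i$ fell into more than one $\Gamma$-orbit, the isotypic components corresponding to distinct orbits would descend to proper nonzero $FG$-submodules of $\mathcal{X}$, contradicting $F$-irreducibility. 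Hence the $\mathcal{Y}_i$ form a single $\Gamma$-orbit; consequently $m_1=\cdots=m_k=m$ and $\deg(\mathcal{Y}_1)=\cdots=\deg(\mathcal{Y}_k)=\deg(\mathcal{Y})$.

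Combining the two displays yields $d=km^2$ and $n=km\deg(\mathcal{Y})$, from which $d = mn/\deg(\mathcal{Y})$ follows at once. The main obstacle is the Galois transitivity step; everything else is bookkeeping with Schur's lemma and extension of scalars. This descent argument is the classical one underlying the theory of Schur indices, and it applies uniformly in arbitrary characteristic, which is what is needed for the paper's intended application to $\mathbb{F}_2 L$-modules with $L=\operatorname{PSL}_2(q)$ (where for $q\equiv 3\pmod 8$ one has $k=2$, $m=1$ and for $q\equiv -1\pmod 8$ one has $k=1$, $m=1$).
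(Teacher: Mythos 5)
Your proposal is correct in substance, but it takes a genuinely different route from the paper, which gives no argument at all: Proposition \ref{cen} is simply referred to Isaacs, Problem 9.14 and the discussion following it, where $d=mn/\deg(\mathcal{Y})$ is extracted from the double-centraliser theorem and the structure of $\operatorname{End}_{FG}(\mathcal{X})$ as a division algebra (the circle of ideas around Schur indices that you yourself invoke). Your extension-of-scalars computation is the standard self-contained proof, and the bookkeeping is right: $D\otimes_F E$ is the centraliser of $\mathcal{X}^E(EG)$ because a centraliser is the solution space of $F$-linear equations, Schur plus Wedderburn give $d=\sum_i m_i^2$, Galois descent forces a single orbit so that $d=km^2$ and $n=km\deg(\mathcal{Y})$, whence the formula. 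Two steps are used silently and deserve a sentence each. First, writing $\mathcal{X}^E$ as a \emph{direct sum} of irreducibles assumes $\mathcal{X}^E$ is semisimple; this is automatic when $F$ is perfect (the only case the paper needs, $F=\mathbb{F}_2$), and holds for group algebras over any field because the centre of the relevant simple component of $FG/J(FG)$ is generated by sums of roots of unity and is therefore separable over $F$, but it is not free for a general simple module over a general algebra, and without it the centraliser would not be $\prod_i\operatorname{M}_{m_i}(E)$. Second, ``enlarge $E$ so that $E/F$ is Galois'' is not literally available if $E/F$ has a transcendental or inseparable part; the clean fix is to observe that all the $\mathcal{Y}_i$ are already realisable over a finite Galois subextension $E_0/F$ (a finite field or a cyclotomic extension) and to run the descent over $E_0$, since further base change to $E$ alters neither the $m_i$ nor the degrees. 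With those two remarks supplied, your argument is a complete proof and, unlike the paper's citation, a self-contained one.
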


\begin{proof} See \cite[Problem 9.14, p.158]{06Isa} and the discussion thereafter.
\end{proof}

\section{Nonsplit extensions}

As in the introduction, we let $L=\operatorname{PSL}_2(q)$, $q$ odd, and let $V$ be the natural permutation $(q+1)$-dimensional $\mathbb{F}_2L$-module
corresponding to the permutation action of $L$ on the projective line $\mathbb{P}^1_q$. Then there is a unique series
of submodules
$$
0< I< W< V
$$
with $V/W\cong I$ being the principal $\mathbb{F}_2L$-module. Denote $U=W/I$. Then $\operatorname{dim} U=q-1$.
If $q\equiv \pm1\pmod 8$, we have $U=U_+\oplus U_-$ with absolutely irreducible summands $U_\pm$ of dimension $(q-1)/2$
over $\mathbb{F}_2$. If $q\equiv \pm 3\pmod 8$ then $U$ is irreducible but not absolutely. In this case, we have
$U\otimes \mathbb{F}_4=U_+\oplus U_-$ with absolutely irreducible summands $U_\pm$ of dimension $(q-1)/2$ over $\mathbb{F}_4$.

The existence of nonsplit extensions we are interested in is a
consequence\footnote{It is required in \cite{00Buri} that $q\geqslant 5$. But it can be checked that the results are also valid for $q=3$.}
of the following results.

\begin{prop} \label{cohom}\cite[Theorems 1,2]{00Buri}, \cite{06Wei}, \cite[Lemma 13]{13Zav} Let
$$
k=\left\{
\begin{array}{rl}
  \mathbb{F}_2, & \text{if}\ \ q\equiv \pm1 \pmod 8; \\
  \mathbb{F}_4,  & \text{if}\ \ q\equiv \pm3 \pmod 8.
\end{array}
\right.
$$
In the above notation, we have
\begin{enumerate}
  \item[$(i)$] $H^1(L,U)\cong \mathbb{F}_2^2$;
  \item[$(ii)$] $H^1(L,U_{\pm})\cong k$;
  \item[$(iii)$] $H^2(L,U)\cong\left\{
\begin{array}{rl}
  0, & \text{if}\ \ q\equiv \phantom{-}1\pmod 4; \\
  \mathbb{F}_2^2,  & \text{if}\ \ q\equiv -1\pmod 4;
\end{array}
\right.$
  \item[$(iv)$] $H^2(L,U_{\pm})\cong \left\{
\begin{array}{rl}
  0, & \text{if}\ \ q\equiv \phantom{-}1\pmod 4; \\
  k,  & \text{if}\ \ q\equiv -1\pmod 4.
\end{array}
\right.$
\end{enumerate}
\end{prop}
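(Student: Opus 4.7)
The plan is to push the cohomology of $U$ (and of its absolutely irreducible constituents $U_\pm$) down to the ambient permutation module $V$, where Shapiro's lemma applies, and then climb back up through the filtration $0<I<W<V$ by means of the two short exact sequences $0\to W\to V\to I\to 0$ and $0\to I\to W\to U\to 0$.

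First, I would compute $H^*(L,V)$. Since $V=\operatorname{Ind}_B^L\mathbb{F}_2$, where $B$ is the stabiliser in $L$ of a point of $\mathbb{P}^1_q$, Shapiro's lemma gives $H^*(L,V)\cong H^*(B,\mathbb{F}_2)$. Writing $B=T\rtimes C$ with $T$ the unipotent radical (of order $q$) and $C$ cyclic of order $(q-1)/2$, the oddness of $|T|$ collapses the Lyndon--Hochschild--Serre spectral sequence to $H^*(B,\mathbb{F}_2)\cong H^*(C,\mathbb{F}_2)$, which depends only on the $2$-part of $|C|$: it vanishes in positive degrees when $q\equiv -1\pmod 4$ and equals $\mathbb{F}_2$ in every degree when $q\equiv 1\pmod 4$.

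Next, I would chain together the two long exact sequences, feeding in as auxiliary input that $L$ is perfect (so $H^1(L,\mathbb{F}_2)=0$) and that the Schur multiplier of $L$ gives $H^2(L,\mathbb{F}_2)\cong\mathbb{F}_2$. The key geometric remark is that the quotient map $V\twoheadrightarrow V/W\cong I$ is zero on $L$-invariants, because the unique invariant line of $V$ is spanned by the all-ones vector, which lies in $W$ since $q+1$ is even. This forces the connecting map $H^0(L,I)\hookrightarrow H^1(L,W)$ to be injective, after which a finite diagram chase pins down $H^1(L,W)$ and $H^2(L,W)$, and in turn $H^1(L,U)$ and $H^2(L,U)$ as in $(i)$ and $(iii)$. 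The case split between $q\equiv 1$ and $q\equiv -1\pmod 4$ enters precisely through the positive-degree behaviour of $H^*(L,V)$ determined in Step~1.

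For the absolutely irreducible summands, when $q\equiv\pm 1\pmod 8$ the decomposition $U=U_+\oplus U_-$ already holds over $\mathbb{F}_2$, so cohomology splits, and the outer automorphism of $L$ induced by $\operatorname{PGL}_2(q)$, which swaps $U_+$ and $U_-$, forces $H^i(L,U_+)\cong H^i(L,U_-)$, yielding $(ii)$ and $(iv)$. When $q\equiv\pm 3\pmod 8$ the summands $U_\pm$ live only over $\mathbb{F}_4$; flat base change gives $H^i(L,U)\otimes_{\mathbb{F}_2}\mathbb{F}_4\cong H^i(L,U_+)\oplus H^i(L,U_-)$, and Galois conjugacy between the two summands forces them to have equal $\mathbb{F}_4$-dimension. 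The step I expect to be most technically delicate is the determination of $H^2(L,U)$ for $q\equiv -1\pmod 4$: the long exact sequence only places it inside the kernel of a map $H^3(L,\mathbb{F}_2)\to H^3(L,W)$, so to pin down the sharp dimension $2$ one needs either a direct computation of $H^{\leq 3}(L,\mathbb{F}_2)$ via the dihedral Sylow $2$-subgroup combined with a Swan-style stability analysis, or an independent exhibition of enough nonsplit extensions $0\to U\to G\to L\to 1$ to realise the full $\mathbb{F}_2^2$.
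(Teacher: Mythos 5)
The paper offers no proof of Proposition~\ref{cohom} at all: it is quoted from the literature, namely from Burichenko \cite[Theorems 1, 2]{00Buri}, Weigel \cite{06Wei} and \cite[Lemma 13]{13Zav}. So your attempt has to be judged as a self-contained argument, and as such its overall strategy is sound: Shapiro's lemma does give $H^*(L,V)\cong H^*(B,\mathbb{F}_2)\cong H^*(C,\mathbb{F}_2)$ with the stated dependence on $q\bmod 4$, the observation that the invariant (all-ones) vector of $V$ lies in $W$ correctly forces $H^0(L,I)\hookrightarrow H^1(L,W)$, and the passage from $U$ to $U_\pm$ via the swapping outer automorphism (resp.\ Galois conjugation after base change to $\mathbb{F}_4$) is fine. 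In the case $q\equiv-1\pmod 4$ the chase really does give $H^1(L,W)\cong\mathbb{F}_2$, $H^2(L,W)=0$ and hence $\dim H^1(L,U)=2$ cleanly.

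The genuine gap is that the long exact sequences do not by themselves ``pin down'' the remaining groups; at several points you must identify specific maps, and you only acknowledge one of them. Concretely: (a) for $q\equiv-1\pmod 4$ one gets $H^3(L,W)\cong H^2(L,\mathbb{F}_2)\cong\mathbb{F}_2$ and $H^2(L,U)=\ker\bigl(H^3(L,\mathbb{F}_2)\to H^3(L,W)\bigr)$ with $\dim H^3(L,\mathbb{F}_2)=2$, so the answer is $1$ or $2$ according to whether that map is nonzero or zero --- this is exactly the statement the rest of the paper depends on (existence and uniqueness of the nonsplit extension), and you leave it as an ``either/or'' without carrying out either option; the second option (exhibiting enough nonsplit extensions) only gives the lower bound $\dim H^2(L,U)\geqslant 2$ and would still need the upper bound from cohomology. (b) For $q\equiv 1\pmod 4$ the chase leaves $\dim H^1(L,U)\in\{2,3\}$ and does not immediately give $H^2(L,U)=0$; one must additionally show, e.g., that the restriction $H^2(L,\mathbb{F}_2)\to H^2(B,\mathbb{F}_2)$ is injective (true because the preimage of $B$ in $\operatorname{SL}_2(q)$ is a nonsplit extension exactly when $(q-1)/2$ is even) and control a map into $H^3$. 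So the skeleton is right, but the proof is incomplete precisely at the computations that matter most; to finish it along these lines you would need $H^{\leqslant 3}(L,\mathbb{F}_2)$ together with the restriction maps to $B$ (equivalently, a stable-elements computation on the dihedral or Klein four Sylow $2$-subgroup), which is essentially what the cited sources supply.
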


\begin{prop} \label{exn} \cite[Theorem 3]{00Buri}. Let the exact sequence of groups
$$
0\to V\to G\to L\to 1
$$
be nonsplit, where $L=\operatorname{PSL}_2(q)$, $q$ odd, and $V$ is an elementary abelian $2$-group distinct from $\mathbb{Z}_2$
which is irreducible if viewed as
an $\mathbb{F}_2L$-module. Then $q\equiv -1\pmod 4$, $G$ is unique up to isomorphism, and
$$
V\cong \left\{
\begin{array}{rl}
  U_{\phantom\pm}, & \text{if}\ \ q\equiv \phantom{-}3 \pmod 8; \\
  U_{\pm},  & \text{if}\ \ q\equiv -1 \pmod 8,
\end{array}
\right.
$$
where the  $\mathbb{F}_2L$-modules $U$ and $U_{\pm}$ are as defined above.
\end{prop}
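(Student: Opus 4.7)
The plan is to classify nonsplit extensions via the second cohomology group $H^2(L,V)$ and then invoke Proposition~\ref{cohom}. Equivalence classes of extensions $0\to V\to G\to L\to 1$ with a fixed $L$-action on $V$ correspond bijectively to elements of $H^2(L,V)$, the zero class yielding the split extension. Two such extensions give rise to isomorphic groups $G$ precisely when the corresponding cohomology classes lie in the same orbit under the natural action of $\operatorname{Aut}(L)\times\operatorname{Aut}_L(V)$ on $H^2(L,V)$. The problem therefore reduces to (a) identifying all nontrivial irreducible $\mathbb{F}_2L$-modules $V$ with $H^2(L,V)\ne 0$, and (b) analysing the orbits on the nonzero classes.

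For step (a), I would first recall that the nontrivial composition factors of the $(q+1)$-dimensional permutation $\mathbb{F}_2L$-module on $\mathbb{P}^1_q$ are $U$ when $q\equiv\pm 3\pmod 8$ and $U_{\pm}$ when $q\equiv\pm 1\pmod 8$, as described before Proposition~\ref{cohom}. Parts (iii) and (iv) of that proposition immediately show $H^2(L,U)=H^2(L,U_{\pm})=0$ when $q\equiv 1\pmod 4$, so no nonsplit extension with these modules exists in that case. The main obstacle is to rule out \emph{all other} nontrivial irreducible $\mathbb{F}_2L$-modules $V$. I would handle this by enumerating the $2$-modular irreducible Brauer characters of $\operatorname{PSL}_2(q)$ (which are classical, with dimensions tied to characters of $\operatorname{SL}_2(q)$) and showing $H^2(L,V)=0$ for each remaining candidate, for instance via the Lyndon--Hochschild--Serre spectral sequence applied to the cyclic subgroup of order $(q+1)/2$ or $(q-1)/2$, or via direct restriction to a Sylow $2$-subgroup of $L$. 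Once this is done, we conclude $q\equiv -1\pmod 4$ and $V\in\{U,U_+,U_-\}$.

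For step (b), assume $q\equiv -1\pmod 4$. If $q\equiv 3\pmod 8$, then $U$ is irreducible but not absolutely irreducible with $\operatorname{End}_{\mathbb{F}_2L}(U)\cong\mathbb{F}_4$, so $H^2(L,U)\cong\mathbb{F}_2^2$ is a one-dimensional $\mathbb{F}_4$-space on which $\operatorname{Aut}_L(U)\cong\mathbb{F}_4^\times$ acts transitively through scalar multiplication; consequently the three nonzero classes yield isomorphic extensions and $G$ is unique with $V\cong U$. If $q\equiv -1\pmod 8$, each of $U_\pm$ satisfies $H^2(L,U_\pm)\cong\mathbb{F}_2$ with a unique nontrivial class, and the outer automorphism of $L$ induced by $\operatorname{PGL}_2(q)$ interchanges $U_+$ and $U_-$, identifying the corresponding extensions. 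In both subcases $G$ is therefore determined up to isomorphism, completing the proof.
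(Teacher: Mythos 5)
First, note that the paper offers no proof of this proposition at all: it is imported verbatim as \cite[Theorem 3]{00Buri}, so the only ``paper proof'' is the citation. Your overall strategy --- parametrise extensions with a fixed coupling by $H^2(L,V)$ and identify isomorphic groups via the induced action of automorphisms on cohomology --- is the natural one and is consistent with the machinery the paper does use (Propositions \ref{cohom} and \ref{Rob}). Your step (b) is essentially right: for $q\equiv 3\pmod 8$ the transitive action of $\operatorname{End}_{\mathbb{F}_2L}(U)^\times\cong\mathbb{F}_4^\times$ on the three nonzero classes of $H^2(L,U)$ is exactly the observation the authors make later in the proof of Theorem \ref{main}(ii), and identifying the two $U_\pm$-extensions via the diagonal automorphism (which swaps $\chi_+$ and $\chi_-$, cf.\ Lemma \ref{inchr}) is correct.

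The genuine gap is in step (a), which is where the substance of Burichenko's theorem lies. You must show that \emph{no} nontrivial irreducible $\mathbb{F}_2L$-module other than $U$ (resp.\ $U_\pm$) has nonvanishing $H^2$, and this is precisely the step you defer: you say you ``would'' enumerate the $2$-modular irreducibles and kill their cohomology by a spectral sequence or by restriction to a Sylow $2$-subgroup, but you do not carry it out, and the tools you name do not obviously succeed --- restriction to a Sylow $2$-subgroup is injective on mod-$2$ cohomology, so it cannot by itself prove vanishing, and the cyclic subgroups of order $(q\pm1)/2$ have even order, so no coprimality argument is available there. The clean way to close the gap is block theory: $H^n(L,V)=\operatorname{Ext}^n_{\mathbb{F}_2L}(\mathbb{F}_2,V)$ vanishes whenever $V$ lies outside the principal $2$-block, and by the decomposition matrices of \cite{76Bur} the only nontrivial irreducible Brauer characters in the principal $2$-block of $\operatorname{PSL}_2(q)$ are $\chi_\pm$, so the only candidates over $\mathbb{F}_2$ are indeed $U$ or $U_\pm$; without this (or an equivalent) argument the reduction to Proposition \ref{cohom} is unjustified. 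Two smaller imprecisions: the group acting on $H^2(L,V)$ is the group of compatible pairs (essentially $N_{\operatorname{Aut}(V)}(\mathcal{C}(L))$ acting as in Proposition \ref{Rob}), not the direct product $\operatorname{Aut}(L)\times\operatorname{Aut}_L(V)$; and you should remark that $V$ is characteristic in $G$ (the unique maximal normal elementary abelian $2$-subgroup), so that an abstract isomorphism between two such extensions really does induce such a compatible pair.
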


\section{Automorphisms of extensions}

A detailed account of the general theory of automorphisms of group extensions is given in \cite{82Rob}. We are interested in
the following particular situation. Let
\begin{equation}\label{ext}
\bm{e}: \qquad 0\to A \stackrel{\mu}{\longrightarrow}G \stackrel{\varepsilon}{\longrightarrow} Q \to 1
\end{equation}
be an extension of groups with abelian kernel $A$ and injective coupling. The latter means that the
representation $\mathcal{C}:Q\to \operatorname{Aut}(A)$, known as the {\em coupling} of $\bm{e}$, that corresponds to the conjugation of $A\mu$ in $G$ is faithful, or, equivalently, that $A\mu=C_G(A\mu)$.
In particular, $Q\cong \mathcal{C}(Q)$ and the conjugation of $\mathcal{C}(Q)$ by any $\nu\in N_{\operatorname{Aut}(A)}(\mathcal{C}(Q))$
induces an element $\nu'\in \operatorname{Aut}(Q)$.
The extension $\bm{e}$ determines an element $\overline{\varphi}\in H^2(Q,A)$, where $A$ is viewed as a $\mathbb{Z}Q$-module via $\mathcal{C}$.
One defines an action of $N_{\operatorname{Aut}(A)}(\mathcal{C}(Q))$ on $H^2(Q,A)$ given by
\begin{equation}\label{act}
\overline{\psi}\mapsto(\nu')^{-1}\overline{\psi}\nu
\end{equation}
for every $\nu \in N_{\operatorname{Aut}(A)}(\mathcal{C}(Q))$ and $\overline{\psi}\in H^2(Q,A)$,
which should be understood modulo $B^2(Q,A)$ for representative cocycles, see \cite{82Rob} for details.
We denote by $N_{\operatorname{Aut}(A)}^{\,\overline{\varphi}}(\mathcal{C}(Q))$ the stabiliser of $\overline{\varphi}$
with respect to this action. In other words,
$$
N_{\operatorname{Aut}(A)}^{\,\overline{\varphi}}(\mathcal{C}(Q))=
\{\nu \in N_{\operatorname{Aut}(A)}(\mathcal{C}(Q)) \mid \overline{\varphi}\nu = \nu'\overline{\varphi}\}.
$$
Also, denote by $\operatorname{Aut}(\bm{e})$ the subgroup of $\operatorname{Aut}(G)$ that leaves $A\mu$ invariant.

\begin{prop}\label{Rob}\cite[Assertions (4.4),(4.5)]{82Rob}
Let $\bm{e}$ be the extension $(\ref{ext})$ with abelian kernel $A$, injective coupling $\mathcal{C}:Q\to \operatorname{Aut}(A)$, and a corresponding
$\overline{\varphi}\in H^2(Q,A)$. Then there is an exact sequence of groups
$$
0\to Z^1(Q,A)\to \operatorname{Aut}(\bm{e})\to N_{\operatorname{Aut}(A)}^{\,\overline{\varphi}}(\mathcal{C}(Q))\to 1.
$$
\end{prop}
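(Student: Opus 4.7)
The plan is to realize the sequence by constructing both maps explicitly, using a fixed set-theoretic section $s : Q \to G$ of $\varepsilon$ with $s(1)=1$ and the corresponding $2$-cocycle $\varphi \in Z^2(Q,A)$ defined by $s(x)s(y) = \varphi(x,y)\,s(xy)$, whose class is $\overline{\varphi}$. I identify $A$ with $A\mu$ throughout, so that conjugation by $s(q)$ on $A$ realizes $\mathcal{C}(q)$.

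First I would define the forward map $\Phi : \operatorname{Aut}(\bm{e}) \to N_{\operatorname{Aut}(A)}(\mathcal{C}(Q))$ by restriction. Any $\theta \in \operatorname{Aut}(\bm{e})$ restricts to $\theta_A := \theta|_A \in \operatorname{Aut}(A)$ and induces $\theta_Q \in \operatorname{Aut}(Q)$ via $\varepsilon\theta = \theta_Q \varepsilon$. Applying $\theta$ to the identity $s(q)\,a\,s(q)^{-1}=\mathcal{C}(q)(a)$ gives $\theta_A \mathcal{C}(q)\theta_A^{-1} = \mathcal{C}(\theta_Q(q))$, which simultaneously shows $\theta_A \in N_{\operatorname{Aut}(A)}(\mathcal{C}(Q))$ and, by injectivity of $\mathcal{C}$, that the induced automorphism $(\theta_A)'$ of $Q$ equals $\theta_Q$. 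To see that $\Phi(\theta)$ lies in the stabiliser $N^{\overline{\varphi}}$, I transport the section: $s' := \theta\circ s\circ \theta_Q^{-1}$ is another section of $\varepsilon$, and a direct computation gives its cocycle as $\theta_A \circ \varphi \circ (\theta_Q^{-1}\times \theta_Q^{-1})$, a representative of $((\theta_A)')^{-1}\overline{\varphi}\,\theta_A$ under the action (\ref{act}). Since any two sections yield cohomologous cocycles, this class equals $\overline{\varphi}$, proving $\overline{\varphi}\theta_A = (\theta_A)'\overline{\varphi}$.

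For surjectivity, given $\nu \in N^{\overline{\varphi}}$ with induced $\nu'\in\operatorname{Aut}(Q)$, the stabiliser condition $(\nu')^{-1}\overline{\varphi}\nu = \overline{\varphi}$ at the level of cochains yields a map $\delta : Q \to A$ with
$$
\nu(\varphi(x,y)) - \varphi(\nu'(x),\nu'(y)) = \mathcal{C}(\nu'(x))\delta(\nu'(y)) - \delta(\nu'(xy)) + \delta(\nu'(x)).
$$
Using the normal form $g = a\,s(q)$, I would then define
$$
\theta(a\,s(q)) = \nu(a)\,\delta(\nu'(q))\,s(\nu'(q)).
$$
Checking that $\theta$ is a homomorphism reduces precisely to the displayed cocycle identity, combined with the intertwining $\nu\mathcal{C}(q) = \mathcal{C}(\nu'(q))\nu$. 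The analogous formula built from $\nu^{-1}$ and $\delta \circ (\nu')^{-1}$ supplies the inverse, so $\theta \in \operatorname{Aut}(\bm{e})$ with $\Phi(\theta)=\nu$.

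Finally, to identify $\ker\Phi$: if $\theta_A = \operatorname{id}$, the relation $(\theta_A)' = \theta_Q$ forces $\theta_Q = \operatorname{id}$, so $\theta$ fixes $A$ pointwise and acts within each coset of $A$. Writing $\theta(s(q)) = \delta(q)\,s(q)$ defines a unique map $\delta : Q \to A$, and expanding $\theta(s(x)s(y)) = \theta(s(x))\theta(s(y))$ via $s(x)\delta(y) = \mathcal{C}(x)(\delta(y))\,s(x)$ turns the homomorphism condition exactly into the $1$-cocycle identity $\delta(xy) = \delta(x) + \mathcal{C}(x)\delta(y)$; every such $\delta$ conversely produces an element of $\ker\Phi$, giving $\ker\Phi \cong Z^1(Q,A)$. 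The main obstacle is the surjectivity step: passing from the equality of cohomology classes to an explicit cochain $\delta$ and verifying multiplicativity of the candidate $\theta$ on arbitrary products $(a\,s(x))(b\,s(y))$, where the coupling-compatibility of $\nu$ has to be invoked carefully. The rest is formal bookkeeping once $s$ and $\varphi$ are fixed.
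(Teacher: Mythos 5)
Your construction is correct and complete in outline: the restriction map $\theta\mapsto\theta|_A$, the transported section argument showing the image lies in the stabiliser of $\overline{\varphi}$, the explicit lift $\theta(a\,s(q))=\nu(a)\,\delta(\nu'(q))\,s(\nu'(q))$ for surjectivity, and the identification of the kernel with $Z^1(Q,A)$ are exactly the standard proof of this statement. The paper itself gives no proof, citing Robinson's assertions (4.4) and (4.5) directly, and your argument is essentially the one found there, so there is nothing to add.
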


The following particular case is worth mentioning.

\begin{cor}\label{CRob}
Let $\bm{e}$ be a split extension $(\ref{ext})$ with abelian kernel $A$ and injective coupling $\mathcal{C}:Q\to \operatorname{Aut}(A)$.
Then there is an exact sequence of groups
$$
0\to Z^1(Q,A)\to \operatorname{Aut}(\bm{e})\to N_{\operatorname{Aut}(A)}(\mathcal{C}(Q))\to 1.
$$
\end{cor}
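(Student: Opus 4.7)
The plan is to derive the corollary immediately from Proposition~\ref{Rob} by observing that the hypothesis of splitness removes the cocycle obstruction entirely. Concretely, I would first note that since $\bm{e}$ is a split extension, the element $\overline{\varphi}\in H^2(Q,A)$ attached to $\bm{e}$ is the zero class, because split extensions correspond precisely to the trivial class in $H^2(Q,A)$.

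Next, I would examine the action~(\ref{act}) of $N_{\operatorname{Aut}(A)}(\mathcal{C}(Q))$ on $H^2(Q,A)$. This action is $\mathbb{Z}$-linear on cohomology classes, since both of the operations $\overline{\psi}\mapsto\overline{\psi}\nu$ (restriction of scalars along $\nu$, acting on the coefficient module) and $\overline{\psi}\mapsto(\nu')^{-1}\overline{\psi}$ (precomposition with the automorphism $\nu'$ of $Q$) are homomorphisms of $H^2$. Therefore every element $\nu\in N_{\operatorname{Aut}(A)}(\mathcal{C}(Q))$ sends the zero class to itself, that is,
$$
(\nu')^{-1}\overline{\varphi}\nu = (\nu')^{-1}\cdot 0 \cdot \nu = 0 = \overline{\varphi}.
$$
Consequently, the stabiliser coincides with the full normaliser:
$$
N_{\operatorname{Aut}(A)}^{\,\overline{\varphi}}(\mathcal{C}(Q))=N_{\operatorname{Aut}(A)}(\mathcal{C}(Q)).
$$

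Substituting this identification into the exact sequence of Proposition~\ref{Rob} yields the asserted sequence. There is no real obstacle here; the only subtle point to verify is that the action~(\ref{act}) genuinely fixes the zero class, which amounts to checking that representing $\overline{\varphi}$ by the zero cocycle gives $(\nu')^{-1}\cdot 0\cdot \nu=0$ on the cocycle level, with no boundary correction needed. Thus the corollary is a direct specialisation of Proposition~\ref{Rob}.
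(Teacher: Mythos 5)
Your proposal is correct and follows exactly the paper's own argument: the split extension corresponds to the zero class in $H^2(Q,A)$, the linearity of the action (\ref{act}) forces the stabiliser of $\overline{0}$ to be the full normaliser, and the result then specialises Proposition \ref{Rob}. No differences worth noting.
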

\begin{proof}
The split extension corresponds to the zero element $\overline{0}\in H^2(Q,A)$. Since the action (\ref{act}) is linear, we have
$N_{\operatorname{Aut}(A)}^{\,\overline{0}}(\mathcal{C}(Q))=N_{\operatorname{Aut}(A)}(\mathcal{C}(Q))$. The claim now follows from Proposition \ref{Rob}.
\end{proof}

\section{Action of $\operatorname{Out}(L)$ on $2$-modular characters}

As above, let $L=\operatorname{PSL}_2(q)$, $q=p^m$ odd, and let $U_\pm$ be the two absolutely irreducible $\overline{\mathbb{F}}_2L$-modules
of dimension $(q-1)/2$. Denote by $\chi_\pm$ the corresponding Brauer characters. The values of $\chi_\pm$ in the case $q\equiv -1 \pmod 4$
are shown in Table \ref{2mch}, where representatives $x,y\in L$  have orders $(q-1)/2$ and $n=(q+1)_{2'}$, respectively. Also, $r=1,\ldots,(q-3)/4$
and $s=1,\ldots,(n-1)/2$. This follows from \cite{76Bur}, for example.

\begin{table}[htb]
\centering
\caption{The $2$-modular characters of $L={PSL}_2(q)$ of degree $(q-1)/2$ with $q\equiv -1\pmod 4$}\label{2mch}
\begin{tabular}{c|ccccc}
                           & $1a$   & $pa$ & $pb$  & $(x^r)^L$   & $(y^s)^L$ \\ [2pt]
   \hline
  $\chi_+  ^{\vphantom{A^A}}$  & $\frac{1}{2}(q-1)$      & $\frac{1}{2}(-1+i\sqrt{q})$ & $\frac{1}{2}(-1-i\sqrt{q})$ & . & $-1$  \\[2pt]
  $\chi_-$                     & $\frac{1}{2}(q-1)$      & $\frac{1}{2}(-1-i\sqrt{q})$ & $\frac{1}{2}(-1+i\sqrt{q})$ & . & $-1$
\end{tabular}
\end{table}

Let $\rho$ and $\delta$ denote the field and diagonal automorphisms of $L$ of orders $m$ and $2$, respectively,
induced from the automorphism $[(a_{ij})\mapsto (a_{ij}^p)]$ and the conjugation of $\operatorname{SL}_2(q)$ by
$$
\begin{pmatrix}
  1& \phantom{-}0 \\
  0 & -1 \\
\end{pmatrix}.
$$
The latter is true, since $-1$ is not a square in $\mathbb{F}_q$.
It is known that $\operatorname{Aut}(L)\cong \operatorname{P\Gamma L}_2(q)$ is generated modulo $\operatorname{Inn}(L)$
by $\delta$ and $\rho$.

\begin{lem} \label{inchr}
Let $L=\operatorname{PSL}_2(q)$, $q\equiv -1 \pmod 4$, and let $\chi_\pm\in \operatorname{iBr}_2(L)$ be the two characters of degree $(q-1)/2$.
Additionally, if $q\equiv 3 \pmod 8$ then let $\chi\in \operatorname{iBr}_{\mathbb{F}_2}(L)$ be the character of degree $q-1$. Then
\begin{enumerate}
  \item[$(i)$] $I_{\operatorname{Aut}(L)}(\chi_{\pm})=\operatorname{P\Sigma L}_2(q)$;
  \item[$(ii)$] $I_{\operatorname{Aut}(L)}(\chi)=\operatorname{P\Gamma L}_2(q)$.
\end{enumerate}
\end{lem}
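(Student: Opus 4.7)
The plan is to determine how the two generators $\rho$ and $\delta$ of $\operatorname{Aut}(L)$ modulo $\operatorname{Inn}(L)$ permute the conjugacy classes listed in Table \ref{2mch}. Inspection of that table shows that $\chi_+$ and $\chi_-$ take identical values on every class except the two $p$-classes $pa$ and $pb$, on which they are complex conjugates of one another and are each other's image under the swap $pa \leftrightarrow pb$. The question for (i) thus reduces entirely to how $\rho$ and $\delta$ act on the pair $\{pa, pb\}$, while on the remaining classes the agreement of $\chi_+$ and $\chi_-$ makes any permutation irrelevant.

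For the field automorphism, I would take standard representatives $\begin{pmatrix} 1 & 1 \\ 0 & 1 \end{pmatrix}$ and $\begin{pmatrix} 1 & \epsilon \\ 0 & 1 \end{pmatrix}$ of the two unipotent $L$-classes, where $\epsilon \in \mathbb{F}_q^*$ is a fixed non-square. The entry-wise Frobenius $\rho$ sends these to $\begin{pmatrix} 1 & 1 \\ 0 & 1 \end{pmatrix}$ and $\begin{pmatrix} 1 & \epsilon^p \\ 0 & 1 \end{pmatrix}$ respectively; since Frobenius preserves the subgroup of squares in $\mathbb{F}_q^*$, the element $\epsilon^p$ is again a non-square, so each class is individually stabilised and $\chi_\pm^\rho = \chi_\pm$. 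For $\delta$, conjugation by $\operatorname{diag}(1,-1)$ carries $\begin{pmatrix} 1 & 1 \\ 0 & 1 \end{pmatrix}$ to $\begin{pmatrix} 1 & -1 \\ 0 & 1 \end{pmatrix}$, and the hypothesis $q \equiv -1 \pmod 4$ makes $-1$ a non-square, so this image lies in the opposite class. Thus $\delta$ swaps $pa$ and $pb$ and hence swaps $\chi_+$ with $\chi_-$. Combining, $I_{\operatorname{Aut}(L)}(\chi_\pm) = \langle \operatorname{Inn}(L), \rho \rangle = \operatorname{P\Sigma L}_2(q)$, which is (i).

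For (ii), when $q \equiv 3 \pmod 8$ the footnote on $\operatorname{iBr}_F$ together with \cite[Theorem (9.21)]{06Isa} identifies $\chi$ with the sum $\chi_+ + \chi_-$ over the Galois orbit of $\chi_+$ under $\operatorname{Gal}(\mathbb{F}_4/\mathbb{F}_2)$. From the analysis in (i) the unordered pair $\{\chi_+, \chi_-\}$ is preserved by every element of $\operatorname{Aut}(L)$ (both $\rho$ and $\delta$ permute it), so $\chi$ itself is $\operatorname{Aut}(L)$-invariant and $I_{\operatorname{Aut}(L)}(\chi) = \operatorname{Aut}(L) = \operatorname{P\Gamma L}_2(q)$.

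The only non-routine point is the verification that $\delta$ swaps the two unipotent classes while $\rho$ fixes each of them; this is standard for $\operatorname{PSL}_2(q)$ but it crucially uses $q \equiv -1 \pmod 4$, so that $-1$ is a non-square, for the $\delta$-computation to land in the opposite class. Once that is in hand, both statements of the lemma fall out directly from Table \ref{2mch}.
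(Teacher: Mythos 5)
Your proposal is correct and follows essentially the same route as the paper: both arguments reduce everything to how $\delta$ and $\rho$ act on the two unipotent classes, using that $-1$ is a non-square for $q\equiv -1\pmod 4$ to see that $\delta$ swaps them while $\rho$ fixes each, whence $\delta$ interchanges $\chi_+$ and $\chi_-$ and $\rho$ stabilises them. The only cosmetic difference is that you dismiss the remaining classes via the $\operatorname{Aut}(L)$-stability of the pair $\{\chi_+,\chi_-\}$ and their agreement off $pa,pb$, whereas the paper argues directly that element orders force the $x$-classes and $y$-classes to be permuted among themselves.
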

\begin{proof} We have $\chi=\chi_++\chi_-$ by Table \ref{2mch}. It is sufficient to consider the action
of $\delta$ and $\rho$ on the conjugacy classes of $L$.
The classes represented by $x^r$ are permuted amongst themselves by both $\delta$ and $\rho$,
because these are the only classes of elements whose order divides $(q-1)/2$. Similarly for the classes
represented by $y^s$. Since $q\equiv -1 \pmod 4$, the projective images in $L$ of the mutually inverse matrices
$$
\begin{pmatrix}
  1& 1 \\
  0 & 1 \\
\end{pmatrix}, \qquad
\begin{pmatrix}
  1&  -1 \\
  0 & \phantom{-}1 \\
\end{pmatrix}
$$
from $\operatorname{SL}_2(q)$
are representatives of the conjugacy classes labelled ``$pa$'' and ``$pb$'' of elements of order $p$. Clearly, these two classes are permuted
by $\delta$ and stabilised by $\rho$. Consequently, $\chi_{+}$ and $\chi_{-}$ are also permuted by $\delta$ and stabilised by $\rho$, while $\chi$
is stabilised by both $\rho$ and~$\delta$. The claim follows from these remarks.
\end{proof}

\section{1-Cocycles}

Let $\bm{e}$ be the extension (\ref{ext}) with abelian kernel $A$. Recall that the abelian group of 1-cocycles
$$
Z^1(Q,A)=\{\theta:Q\to A\mid (xy)\theta=x\theta\cdot y+y\theta\ \text{for all}\ x,y\in Q\}
$$
contains the subgroup of 1-coboundaries
$$
B^1(Q,A)=\{\theta_a\in Z^1(Q,A), a\in A \mid x\theta_a=ax-a\ \text{for all}\ x\in Q\}
$$
such that
\begin{equation}\label{h1}
H^1(Q,A)\cong Z^1(Q,A)/B^1(Q,A),
\end{equation}
and the map $A\to B^1(Q,A)$ sending $a\mapsto \theta_a$ is a homomorphism of abelian groups with kernel $C_A(Q)$. In particular,
\begin{equation}\label{b1}
B^1(Q,A)\cong A/C_A(Q).
\end{equation}

{\em Remark.} The cohomology groups $H^n(Q,A)$ admit naturally a linear action of $Z=C_{\operatorname{Aut}(A)}(\mathcal{C}(Q))$ which is
induced from the action on cocycles.
For example, in dimension $2$, if we take $\psi\in Z^2(Q,A)$, $\nu\in Z$, and define $\psi^\nu$ by
\begin{equation}\label{zact}
(x,y)(\psi^\nu)=(x,y)\psi\nu
\end{equation}
for all $x,y\in Q$, then this action preserves the $2$-cocycle condition
$$
(x,yz)\psi+  (y,z)\psi = (xy,z)\psi + (x,y)\psi \cdot z,
$$
for $x,y,z\in Q$,  and so $\psi^\nu\in Z^2(Q,A)$. The group $B^2(Q,A)$ is invariant, hence we obtain an induced action
on $H^2(Q,A)$. Similarly in other dimensions.
In particular, if $A$ is a vector space over a field $F$ of dimension $m$ and $\mathcal{C}$ is irreducible, $H^n(Q,A)$ becomes a
vector space over the division ring $D$, the centraliser of $\mathcal{C}(FQ)$ in the matrix ring $\operatorname{M}_m(F)$.

\section{Proof of main results}

Given groups $A$ and $B$, we use the common shorthand notation $A:B$ and $A^{\,\textstyle \cdot}B$ to denote, respectively, a split and a nonsplit extension with normal subgroup $A$ and quotient $B$.

We can now prove Theorem \ref{main}.

\begin{proof} Recall that $L=\operatorname{PSL}_2(q)$ with $q\equiv -1\pmod 4$ and the
the extension
\begin{equation}\label{extp}
0\to V \to G \to L \to 1.
\end{equation}
is nonsplit with nontrivial and irreducible action of $L$ on the elementary abelian $2$-group $V$ which is clearly
characteristic in $G$ being the unique nontrivial elementary abelian normal subgroup. Therefore, $\operatorname{Aut}(G)$ coincides with the automorphism group of
extension (\ref{extp}).

If $q=3$, we have $L\cong A_4$, the extension $G$ is isomorphic to the semidirect product $(C_4\times C_4):C_3$, where $C_i\cong \mathbb{Z}_i$, $i=3,4$,
and a generator of $C_3$ acts on the direct product $C_4\times C_4$ according with the matrix
$$
\left(
  \begin{array}{rr}
    0 & 1 \\
    -1 & -1 \\
  \end{array}
\right)
$$
over $\mathbb{Z}/4\mathbb{Z}$. In this case, $\operatorname{Aut}(G)$ has the required structure, viz. an extension of a normal elementary abelian $2$-group of
order $2^4$ by $\operatorname{P\Gamma L}_2(3) \cong S_4$, which can be verified by hand or using a computer.

Henceforth, let $q\geqslant 5$. Then extension (\ref{extp}) has injective coupling and, by Proposition \ref{Rob}, we have the exact sequence
$$
0\to Z^1(L,V)\to \operatorname{Aut}(G)\to N_{\operatorname{Aut}(V)}^{\,\overline{\varphi}}(\mathcal{C}(L))\to 1,
$$
where $\overline{\varphi}\in H^2(L,V)$ and $\mathcal{C}:L\to \operatorname{Aut}(V)$ correspond to extension (\ref{extp}).
All we need is to determine the structure of both $Z^1(L,V)$ and $N_{\operatorname{Aut}(V)}^{\,\overline{\varphi}}(\mathcal{C}(L))$.
We consider two cases.

$(i)$ Let $q\equiv -1\pmod 8$. By Proposition \ref{exn}, $V\cong U_{\pm}$ is absolutely irreducible
and $\dim_{\mathbb{F}_2}(V)=(q-1)/2$. Also, $H^1(L,V)\cong \mathbb{F}_2$
by Proposition \ref{cohom}$(ii)$. Since $L$ acts irreducibly and nontrivially on $V$, we have $C_V(L)=0$ and $$B^1(L,V)\cong V \cong \mathbb{F}_2^{(q-1)/2}$$
by (\ref{b1}). Therefore, $Z^1(L,V)\cong \mathbb{F}_2^{(q+1)/2}$ by (\ref{h1}).

We have $\operatorname{Aut}(V)\cong \operatorname{GL}_{(q-1)/2}(2)$. Denoting $N=N_{\operatorname{Aut}(V)}(\mathcal{C}(L))$
and $Z=C_{\operatorname{Aut}(V)}(\mathcal{C}(L))$, we obtain
$N/Z\cong I_{\operatorname{Aut}(L)}(\chi_{\pm})$ by Proposition \ref{normim}, where $\chi_{\pm}$ is the Brauer character corresponding to $V$.
By Lemma \ref{inchr}$(i)$, $I_{\operatorname{Aut}(L)}(\chi_{\pm})=\operatorname{P\Sigma L}_2(q)$.
Also, $Z\cong \mathbb{F}_2^\times=1$ by Schur's Lemma, since $V$ is absolutely irreducible. Therefore, $N\cong \operatorname{P\Sigma L}_2(q)$.

By Proposition \ref{cohom}$(iv)$, we have $H^2(L,V)\cong \mathbb{F}_2$. The action (\ref{act}) of $N$ on $H^2(L,V)$ is clearly linear and hence
must be trivial in this case. In particular, $N$ stabilises $\overline{\varphi}$ and
so 
$$N=N_{\operatorname{Aut}(V)}^{\,\overline{\varphi}}(\mathcal{C}(L))\cong \operatorname{P\Sigma L}_2(q).$$ 
This proves $(i)$.

$(ii)$ Let $q\equiv 3\pmod 8$. This case is more complicated as
$V\cong U$ is non-absolutely irreducible by Proposition \ref{exn}. We have
$\dim_{\mathbb{F}_2}(V)=q-1$ and $H^1(L,V)\cong \mathbb{F}_2^2$ by Proposition \ref{cohom}$(i)$.
Furthermore, $C_V(L)=0$ and 
$$B^1(L,V)\cong V \cong \mathbb{F}_2^{q-1}.$$
Thus, $Z^1(L,V)\cong \mathbb{F}_2^{q+1}$.

In this case, $\operatorname{Aut}(V)\cong \operatorname{GL}_{q-1}(2)$. As above, denote
$N=N_{\operatorname{Aut}(V)}(\mathcal{C}(L))$ and $Z=C_{\operatorname{Aut}(V)}(\mathcal{C}(L))$.
By Proposition \ref{normim} and Lemma \ref{inchr}$(ii)$, we have 
$$N/Z\cong I_{\operatorname{Aut}(L)}(\chi)=\operatorname{P\Gamma L}_2(q),$$
where $\chi$ is the Brauer character corresponding to $V$.

Let $D$ be the division ring centralising $\mathcal{C}(\mathbb{F}_2L)$ in the matrix algebra $\operatorname{M}_{q-1}(2)$. Clearly, $Z=D^\times$.
By Proposition \ref{cen}, $\dim_{\mathbb{F}_2}(D)=2$, because
$V\otimes \mathbb{F}_4 = U_+\oplus U_-$ with summands of equal dimension. Therefore, $D\cong \mathbb{F}_4$ and $Z\cong \mathbb{Z}_3$.

It remains to determine $N_0=N_{\operatorname{Aut}(V)}^{\,\overline{\varphi}}(\mathcal{C}(L))$.
We have a homomorphism $\sigma: N\to \operatorname{Sym}_3$ corresponding to the permutation action of $N$ on the three nontrivial elements of
$H^2(L,V)\cong \mathbb{F}_2^2$, see Proposition \ref{cohom}$(iii)$,
and $N_0$ is a point stabiliser with respect to this action.

Observe that $Z$ cyclically permutes the three nontrivial elements of $H^2(L,V)$. This is because the two
actions (\ref{act}) and (\ref{zact}) of every $\nu\in Z$ on $H^2(L,V)$ coincide, as can be readily seen.
It follows that the image of $\sigma$ is either the three-cycle or the whole $\operatorname{Sym}_3$. But the latter is not possible as $N$
has no factors isomorphic to $S_3$. Indeed, $N$ has a simple nonabelian normal subgroup isomorphic to $L$ with quotient isomorphic to an extension
of $Z$ by $\operatorname{Out}(L)\cong \langle\delta \rangle \times \langle\rho \rangle$. This extension, being central, must be abelian as the $3$-part
of $\operatorname{Out}(L)$ is cyclic.

Therefore, $N_0$ has index $3$ in $N$ and $N\cong Z\times N_0$, which yields $N_0\cong N/Z\cong \operatorname{P\Gamma L}_2(q)$. This implies $(ii)$.
\end{proof}

This proof also implies that in both cases $(i)$ and $(ii)$ the representation $L\to \operatorname{Aut}(V)$ extends to $I_{\operatorname{Aut}(L)}(\chi)$
where $\chi$ is the corresponding Brauer character. Of course, in case $(ii)$ this also follows from the fact that $V$ is a reduced permutation
module for $\operatorname{P\Gamma L}_2(q)$.

\section{An application}

We henceforth assume that $\pi$ is a fixed set of prime numbers. Recall that a subgroup $H$ of $G$ is a {\em $\pi$-subgroup} if
every prime divisor of $|H|$ is contained in~$\pi$. In view of Lagrange's theorem, to have an idea of what the $\pi$-subgroups of a given finite group are, it is sufficient to study the {\em $\pi$-maximal} subgroups, i.\,e. the $\pi$-subgroups that are maximal with respect to inclusion. The latter ones, in turn, can naturally be studied up to conjugacy. The set of $\pi$-maximal subgroups of  $G$ will be denoted by $\mathrm{m}_\pi(G)$.

According to Hall's theorem, all  $\pi$-maximal subgroups of finite solvable groups are conjugate and therefore possess a series of nice properties. Among those is the fact that if $N$ is a normal subgroup of a solvable group $G$ then, for every $H\in \mathrm{m}_\pi(G)$, we have $H\cap N \in \mathrm{m}_\pi(N)$ and $HN/N \in \mathrm{m}_\pi(G/N)$. If we dispense of the requirement that $G$ be solvable, then neither of these properties holds in general, which fact restricts to a great extent the possibility for an induction argument when studying the $\pi$-maximal subgroups.

Aiming at compensating for irregularities in the behaviour of $\pi$-maximal subgroups in their intersections with normal subgroups,
H.\,Wielandt proposed during his talk at the Santa Cruz Conference on Finite Groups in 1979 \cite{80Wie} to
study an object somewhat more general than the $\pi$-maximal subgroups.

\medskip

\noindent
{\bf Definition}. A subgroup $H$ of a finite group $G$ is {\em $\pi$-submaximal} (written as $H\in\mathrm{sm}_\pi(G)$) if there exists an embedding of $G$ in a group $G^*$ such that
$$G\trianglelefteqslant\trianglelefteqslant G^*\quad \text{and}\quad H=G\cap K$$
for some $K\in \mathrm{m}_\pi(G)$.

\medskip

Clearly, we have
\begin{itemize}
  \item $\mathrm{m}_\pi(G)\subseteq\mathrm{sm}_\pi(G)$;
  \item {if
${H\in\mathrm{sm}_\pi(G)}$ and ${N\trianglelefteqslant\trianglelefteqslant G}$ then ${H\cap N\in\mathrm{sm}_\pi(N)}$.}
\end{itemize}
Due to the Wielandt--Hartley theorem (see \cite[5.4(a)]{80Wie}, proved in \cite[Theorem~2]{19RevSkrV}) which states that
\begin{itemize}
  \item {if $H\in\mathrm{sm}_\pi(G)$ then $H\in\mathrm{m}_\pi(N_G(H))$},
\end{itemize}
the set $\mathrm{sm}_\pi(G)$ is, as a rule, not much wider than $\mathrm{m}_\pi(G)$. However, a $2$-Sylow subgroup of $G=\operatorname{PSL}_2(7)$, for example, is contained in $\mathrm{sm}_\pi(G)\setminus\mathrm{m}_\pi(G)$, for $\pi=\{2,3\}$.

Wielandt \cite{80Wie} put forward a programme to study the $\pi$-submaximal subgroups (see \cite{18GuoRev1,18GuoRev2} for a detailed discussion and review of results) and, in particular, proposed the problem of studying the $\pi$-submaximal subgroups in the following natural critical case.

\begin{prob}[Offene Frage (g) \cite{80Wie}]\label{pwieA}
Describe the $\pi$-submaximal subgroups in the minimal nonsolvable groups. Study their properties, such as conjugacy by automorphisms, intravariance, pronormality, etc.
\end{prob}

It is known that a nonsolvable group $G$ is {\em minimal nonsolvable} (i.\,e. whose all proper subgroups are solvable) if and only if $G/\Phi(G)$ is a simple minimal nonsolvable group, also known as {\em a minimal simple group}, where $\Phi(G)$ is the Frattini subgroup of $G$. Minimal simple groups were classified by
J.\,Thompson \cite{68Thomp}.

Problem \ref{pwieA} was to a large extent solved in \cite{18GuoRev}, where the $\pi$-submaximal subgroups
of minimal simple groups were classified and their properties were studied. For an arbitrary minimal nonsolvable group $G$, it was shown \cite[Proposition 1]{18GuoRev} that the following properties hold:
\begin{itemize}
 \item $H\in \mathrm{m}_\pi(G)$ if and only if $\overline{H}\in \mathrm{m}_\pi(\overline{G})$,
 \item $H\in \mathrm{sm}_\pi(G)$ only if $\overline{H}\in \mathrm{sm}_\pi(\overline{G})$;
\end{itemize}
where the overline $\,\overline{\phantom{x}}\,$ stands for the canonical epimorphism $G\rightarrow G/\Phi(G)$. As one can see, the following problem remains:

\begin{prob}\label{pwieB}
Is a $\pi$-submaximal (but not $\pi$-maximal) subgroup of $\overline{G}=G/\Phi(G)$ always (and if not always then when is it)
the image of a $\pi$-submaximal subgroup of $G$ for a minimal nonsolvable $G$?
\end{prob}

As expected, the list of $\pi$-submaximal but not $\pi$-maximal subgroups of minimal simple groups $\overline{G}$ is quite short: they can be (under certain restrictions on the odd prime $q\equiv\pm2\pmod 5$ and the set $\pi$ containing $2$ and $3$)
\begin{itemize}
 \item in $\overline{G}=\operatorname{PSL}_2(q)$,
 \begin{itemize}
 \item a dihedral subgroup of order $8$ whenever it is $\pi$-maximal in the normaliser of a $2$-Sylow subgroup of $\overline{G}$,
 \item a dihedral subgroup of order $6$ whenever it is $\pi$-maximal in the normaliser of a $3$-Sylow subgroup of $\overline{G}$;
\end{itemize}
 \item in $\overline{G}=\operatorname{PSL}_3(3)$,
 \begin{itemize}
 \item the normaliser of a $3$-Sylow subgroup,
 \item the centraliser of an involution (all involutions in $\overline{G}$ are conjugate).
\end{itemize}
\end{itemize}
In \cite{20RevZav}, examples are found showing that, for some minimal nonsolvable groups $G$ such that $\overline{G}=\operatorname{PSL}_2(7)$,  a $2$-Sylow subgroup of $\overline{G}$ is the image of no $\pi$-submaximal subgroup of~$G$ even though it is itself $\pi$-submaximal in $\overline{G}$. Therefore, Problems \ref{pwieA} and \ref{pwieB} have not been completely solved, and the authors intend to use the results of this paper to conduct further investigation in this direction.

Evidently, using the definition alone it is quite difficult to prove that a subgroup $H$ of $G$ is {\em not} $\pi$-submaximal. In the case where $G$ is an extension (nonsplit, for example) of an elementary abelian $p$-group $V$ by a nonabelian simple group $L$ acting on $V$ irreducibly and nontrivially, it was shown \cite{20RevZav,20RevZav1} that $G^*$  (see the definition of a $\pi$-submaximal subgroup) can be replaced by a subgroup of $\operatorname{Aut}(G)$. The authors hope that the description obtained in this paper of the automorphism group of a nonsplit extension of an elementary abelian $2$-group by
$\operatorname{PSL}_2(q)$ with irreducible action will let us move forward in solving Problems \ref{pwieA} and \ref{pwieB} and, in particular, will make it possible to completely classify the $\pi$-submaximal subgroups in minimal nonsolvable groups $G$ with $\Phi(G)$ being the minimal normal $2$-subgroup.

To illustrate an intermediate application of the obtained results, we will indicate an infinite series of minimal simple groups that can extend, in a nonsplit fashion, elementary abelian $2$-groups so that the resulting extension $G$ becomes a minimal nonsolvable group and some $\pi$-submaximal subgroup of $L=G/\Phi(G)$ is not the image of any $\pi$-submaximal subgroup of $G$, see Example 2 below.

We now fix some notation. Assume henceforth that $\pi=\{2,3\}$ and $L=\operatorname{PSL}_2(q)$, where $q\equiv 7\pmod{48}$ is prime.
Also, let $V$ be an irreducible $\mathbb{F}_2L$-module of dimension $(q-1)/2$.

\begin{lem}\label{snm}\cite[Table 6]{18GuoRev}
The $2$-Sylow subgroups of $L$ are $\pi$-submaximal.
\end{lem}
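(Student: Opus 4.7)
The lemma is a direct consequence of Table~6 of \cite{18GuoRev}. For a self-contained argument I would exhibit an explicit witness $G^*$ for the $\pi$-submaximality of a Sylow $2$-subgroup $P$ of $L$. The natural choice is $G^*=\operatorname{PGL}_2(q)$, which contains $L$ as a normal subgroup of index~$2$, so $L\trianglelefteqslant G^*$ holds trivially.

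The congruence $q\equiv 7\pmod{48}$ yields $(q-1)_2=2$, $(q+1)_2=8$, and $3\nmid q+1$ (since $q+1\equiv 2\pmod 3$). Consequently $|L|_2=8$, $|G^*|_2=16$, and the Sylow $2$-subgroups can be chosen so that $P\cong D_8\leqslant P^*\cong D_{16}$ and $P=L\cap P^*$.

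The main step---and essentially the only nontrivial one---is to verify that $P^*$ is $\pi$-maximal in $G^*$. I would argue by contradiction: suppose a $\pi$-subgroup $K\leqslant G^*$ strictly contains $P^*$. Since $P^*$ is already the full Sylow $2$-subgroup of $G^*$, the index $[K:P^*]$ is a nontrivial power of $3$, and $K$ must lie in a maximal subgroup of $G^*$. The classical list of maximal subgroups of $\operatorname{PGL}_2(q)$ for a prime $q$ with $q\equiv 7\pmod 8$ consists of $\operatorname{PSL}_2(q)$, a Borel subgroup $q{:}(q-1)$, and the dihedral torus normalizers $D_{2(q-1)}$ and $D_{2(q+1)}$. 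The first three of these have $2$-part at most $8$ and so cannot contain $P^*$, while the $\pi$-Hall of the last coincides with $P^*$ itself because $(q+1)_3=1$ forbids any $3$-contribution. This rules out every candidate for $K$, establishing the $\pi$-maximality of $P^*$.

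With $P^*\in\mathrm{m}_\pi(G^*)$ and $P=L\cap P^*$, the definition of $\pi$-submaximality directly gives $P\in\mathrm{sm}_\pi(L)$. The whole argument is light once the standard subgroup classification of $\operatorname{PGL}_2(q)$ is invoked; the only delicate input is the arithmetic fact $3\nmid q+1$, which is an immediate consequence of $q\equiv 7\pmod{48}$.
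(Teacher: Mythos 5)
The paper offers no argument here at all: Lemma \ref{snm} is stated with a bare citation to \cite[Table 6]{18GuoRev}, so any self-contained verification is by definition a different route. Your reconstruction is correct and is essentially the witness that lies behind that table entry: take $G^*=\operatorname{PGL}_2(q)$, note that $q\equiv 7\pmod{48}$ gives $(q-1)_2=2$, $(q+1)_2=8$ and $3\nmid q+1$, so a Sylow $2$-subgroup $P^*\cong D_{16}$ of $G^*$ meets $L$ in a Sylow $2$-subgroup $P\cong D_8$, and $\pi$-maximality of $P^*$ forces $P\in\mathrm{sm}_\pi(L)$. The one step I would tighten is the appeal to ``the classical list of maximal subgroups of $\operatorname{PGL}_2(q)$'': it is cleaner (and avoids worrying about which dihedral normalizers are actually maximal for small $q$, e.g.\ $q=7$) to invoke Dickson's classification of \emph{all} subgroups of $\operatorname{PGL}_2(q)$ and observe that the only proper subgroups of $2$-part at least $16$ are dihedral groups $D_{2n}$ with $8\mid n\mid q+1$ (the subgroups $A_4$, $S_4$, $A_5$, the Borel subgroup, $D_{2(q-1)}$ and $\operatorname{PSL}_2(q)$ all have $2$-part at most $8$, and there are no subfield subgroups since $q$ is prime); since $3\nmid 2(q+1)$, any such overgroup is a $2'$-extension of a $2$-group of order at most $16$, so a $\pi$-subgroup properly containing $P^*$ cannot exist. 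You should also record explicitly that $K\neq G^*$ (clear, since $q\notin\pi$), so that $K$ does lie in a proper subgroup. With these small patches the argument is complete; what it buys over the paper's citation is a transparent, checkable proof, at the cost of invoking Dickson's theorem.
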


Observe that a $2$-Sylow subgroup of $L$, which is isomorphic to the dihedral group $D_8$, is not $\pi$-maximal because it is
contained in a subgroup isomorphic to $S_4$.

\begin{lem}\label{sylex}
Let $G$ be an extension
\begin{equation}\label{exts}
0\to V \to G \to L \to 1,
\end{equation}
where the conjugation of $G$ on $V$ agrees with its $\mathbb{F}_2L$-module structure.
Then the $2$-Sylow subgroups of G are not $\pi$-submaximal.
\end{lem}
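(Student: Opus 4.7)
My strategy is to suppose, for contradiction, that a Sylow $2$-subgroup $H$ of $G$ lies in $\mathrm{sm}_\pi(G)$, and then exploit the structure of $\operatorname{Aut}(G)$ given by Theorem~\ref{main}. By the reduction cited just before the present lemma (proved in \cite{20RevZav,20RevZav1}), such an $H$ must satisfy $H=G\cap K$ for some subgroups $G\le G^*\le\operatorname{Aut}(G)$ and some $K\in\mathrm{m}_\pi(G^*)$, so it is enough to rule out the existence of such a pair $(G^*,K)$.

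Since $q\equiv 7\pmod{48}$ is prime, we have $q\equiv -1\pmod 8$ and $\operatorname{P\Sigma L}_2(q)=L$, so case $(i)$ of Theorem~\ref{main} produces a normal elementary abelian $2$-subgroup $W\trianglelefteqslant \operatorname{Aut}(G)$ with $\operatorname{Aut}(G)/W\cong L$. Reading off the construction in that proof, under the embedding $G\cong\operatorname{Inn}(G)\hookrightarrow\operatorname{Aut}(G)$ the normal subgroup $V\le G$ corresponds to $B^1(L,V)\subseteq Z^1(L,V)=W$, and in fact $G\cap W=V$. Setting $U:=W\cap G^*$, we obtain that $U\trianglelefteqslant G^*$ is a $2$-group and $G^*/U\cong GU/U\cong G/V\cong L$.

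The crucial structural step is to establish that $O_\pi(G^*)=U$: the inclusion $U\le O_\pi(G^*)$ is immediate, while the reverse follows because any strictly larger normal $\pi$-subgroup of $G^*$ would project onto a nontrivial normal $\pi$-subgroup of the simple nonabelian group $L$, impossible as $L$ contains elements of the prime order $q\ge 7\notin\pi$. Consequently $K\supseteq O_\pi(G^*)=U$, and $K$ is the full preimage in $G^*$ of $\overline K:=K/U\le L$. Intersecting with $G$, via the identification of $G^*/U$ with $G/V=L$, one sees that $K\cap G$ is precisely the preimage in $G$ of $\overline K$ under the quotient $G\twoheadrightarrow L$. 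Therefore the assumption $H=K\cap G$ forces $\overline K=H/V=P$, a Sylow $2$-subgroup of $L$.

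To close the argument I would use that for $q\equiv -1\pmod 8$, $q\ge 7$, the group $L$ contains a subgroup isomorphic to $S_4$ having $P$ as its Sylow $2$-subgroup. The preimage in $G^*$ of such an $S_4$ is a $\{2,3\}$-subgroup of $G^*$ strictly containing $K$ (with index $3$), contradicting the $\pi$-maximality of $K$. The main delicate point in this plan is the identification $G\cap W=V$, and hence $G^*/U\cong L$: this relies on the cohomological description of $W$ from the proof of Theorem~\ref{main}, and once it is secured the remainder is forced by the simplicity of $L$ together with the presence of $S_4\le L$. The same scheme handles the split and nonsplit versions of the extension \eqref{exts} uniformly, since in both cases the structure of $\operatorname{Aut}(G)$ provided by Proposition~\ref{Rob} (respectively Corollary~\ref{CRob}) yields the same $W\trianglelefteqslant\operatorname{Aut}(G)$ and the same quotient $L$.
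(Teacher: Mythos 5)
Your argument is correct and follows essentially the same route as the paper: reduce to $G\le G^*\le\operatorname{Aut}(G)$ via \cite{20RevZav}, use Theorem~\ref{main}$(i)$ (resp.\ Corollary~\ref{CRob} in the split case) to see that $\operatorname{Aut}(G)$ is an extension of the elementary abelian group $W=Z^1(L,V)$ with $W\cap G=V$ by $L\cong\operatorname{P\Sigma L}_2(q)$, and derive the contradiction from the fact that a Sylow $2$-subgroup of $L$ is properly contained in a subgroup isomorphic to $S_4$. The only (cosmetic) difference is that the paper factors out $V$ and uses that $\pi$-maximality passes to the quotient $G^*/V\cong L\times C$ by a normal $\pi$-subgroup, whereas you factor out $U=W\cap G^*$ and exhibit the preimage of $S_4$ as a strictly larger $\pi$-subgroup; both versions hinge on exactly the same structural facts.
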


\begin{proof} Let $S$ be a $2$-Sylow subgroup of $G$ and suppose that it is $\pi$-submaximal.
Let $\overline{\phantom{a}}:G\to L$ be the canonical epimorphism. Note that $S$ is not
$\pi$-maximal, because neither is the $2$-Sylow subgroup $\overline{S}$ of $L$. As we mentioned above,
$\overline{S}$ is contained in a subgroup  $M\leqslant L$ isomorphic to $S_4$.
Let $G^*$ be a finite group of minimal order such that $G\trianglelefteqslant \trianglelefteqslant G^*$ and there is a $\pi$-maximal subgroup
$K$ of $G^*$ with $S=G \cap K$. By \cite[Proposition 1]{20RevZav},
$G\trianglelefteqslant G^*$ and and $C_{G^*}(G)=1$, i.\,e. $G^*\leqslant \operatorname{Aut}(G)$.

By Proposition \ref{exn}, there are only two possibilities for $G$: it is either the semidirect product $V:L$,
or the unique nonsplit extension $V^{\,\textstyle \cdot}L$.
In both cases, $\operatorname{Aut}(G)$ is an extension of $G$ by an outer automorphism
$\alpha$ of order $2$ which acts trivially on both $V$ and $L$. Indeed, in the nonsplit case, this follows from Theorem \ref{main}$(i)$ once we observe that
$\operatorname{P\Sigma L}_2(q)\cong L$, because $q$ is prime. In the split case, this follows from Corollary \ref{CRob}, because $N_{\operatorname{Aut}(V)}(\mathcal{C}(L))\cong
\operatorname{P\Sigma L}_2(q)$ as we established in the proof of Theorem \ref{main}$(i)$.
Consequently, we have either $G^*=G$ or $G^*=\operatorname{Aut}(G)$.

We extend ``$\ \overline{\phantom{a}}\ $'' to the canonical epimorphism $G^*\to G^*/V\cong L\times C$, where $C$ is either trivial or cyclic of order $2$ generated by the image of
$\alpha$. Since $K$ is $\pi$-maximal in $G^*$ and $V$ is a $\pi$-subgroup, it follows that $\overline K$ is $\pi$-maximal in $G^*/V$.
Hence, $\overline K=K_0\times C$, where $K_0$ is $\pi$-maximal in $L=\overline G$. On the other hand, we have $K_0=L\cap \overline{K} = \overline{S}$ by assumption.
But $\overline{S}$ is not $\pi$-maximal in $L$, a contradiction.
\end{proof}

The following example shows that, for a homomorphism $\phi$ whose kernel is an abelian $\pi$-group, a $\pi$-submaximal subgroup
in $\operatorname{Im}\phi$ is not the image of any $\pi$-submaximal subgroup of the domain of $\phi$.

\medskip\noindent
{\bf Example 2.} Let $G$ be the nonsplit extension $V^{\,\textstyle \cdot}L$.
A $2$-Sylow subgroup $H$ of $L$ is $\pi$-submaximal by Lemma \ref{snm}. However, there is no $\pi$-submaximal subgroup of
$G$ whose image in $L$ under the canonical epimorphism $G\to L$ equals $H$, because such a subgroup would necessarily be $2$-Sylow in $G$, but no $2$-Sylow subgroup
of $G$ is $\pi$-submaximal by Lemma~\ref{sylex}.

\medskip
This example is of interest not only in connection with Problems \ref{pwieA} and \ref{pwieB}. It extends the series of examples
constructed in \cite{20RevZav,20RevZav1} which show the difference in behaviour of $\pi$-maximal and $\pi$-submaximal subgroups under homomorphisms. For example, it is clear that there is a natural one-to-one correspondence between the $\pi$-maximal subgroups of a group and those of its quotient by a normal $\pi$-subgroup. Example~2 shows that there is no similar correspondence for
$\pi$-submaximal subgroups --- a $\pi$-submaximal subgroup of the quotient by a normal $\pi$-subgroup is not, generally speaking, the image of a $\pi$-submaximal subgroup. We also construct below an infinite series of examples showing that the image of a $\pi$-submaximal subgroup under an epimorphism $\phi$ whose kernel is an
abelian $\pi$-group is not $\pi$-submaximal in $\operatorname{Im}\phi$. In \cite{20RevZav}, a single example of this kind was found.

\medskip\noindent
{\bf Example 3.} Let $V^*$ be the module contragredient to $V$.
A diagonal automorphism $\delta$ of $L$ of order $2$ permutes $V$ and $V^*$ and so naturally acts on $V\oplus V^*$.
Consequently, the semidirect product $G=(V\oplus V^*): L$ can be extended to $G^*=G\!:\!\langle \delta \rangle$.
The $2$-Sylow subgroup of $G^*$ is $\pi$-maximal.
Hence, the $2$-Sylow subgroup $S$ of $G$ is $\pi$-submaximal. Let $\overline{\phantom{a}}:G\to G/V^*$ be the canonical epimorphism.
Lemma \ref{sylex} implies that
$\overline{S}$ is not $\pi$-submaximal in $\overline{G}$, because $\overline{G}$ is an extension of $V$ by $L$.

\medskip

We remark that the behaviour of $\pi$-submaximal subgroups under homomorphisms was also studied in~\cite[Corollary~4]{21GoRevVdo},
where it was proved that the image of a $\pi$-submaximal subgroup of $G$ in the quotient by a normal subgroup $N$ is $\pi$-submaximal if $N$ coincides with the $\mathfrak{F}$-radical $G_{\mathfrak{F}}$ of $G$ for some Fitting class\footnote{Recall that a {\it Fitting class} is a class of finite groups $\mathfrak{F}$ such that
\begin{itemize}
  \item $N\in \mathfrak{F}$ if $N\trianglelefteqslant G$ and $G\in \mathfrak{F}$,
  \item $G\in \mathfrak{F}$ if $G=MN$ where $M,N\trianglelefteqslant G$ and $M,N\in \mathfrak{F}$.
\end{itemize}
For such an $\mathfrak{F}$, every group $G$ has the {\it $\mathfrak{F}$-radical} $G_{\mathfrak{F}}$, i.\,e. the largest normal $\mathfrak{F}$-subgroup.} $\mathfrak{F}$ and all $\pi$-maximal subgroups of $N$ are conjugate. In light of this result and Example~3, it is natural to ask if the requirement ${N=G_{\mathfrak{F}}}$ in this statement can be weakened to the requirement that $N$ be characteristic in $G$. The answer is unknown even if $N$ is a $\pi$-subgroup.

{\em Acknowledgment.} This work was supported by RFBR and BRFBR, project \textnumero\ 20-51-00007 and by the
State Contract of the Sobolev Institute of Mathematics, project \textnumero\ 0314-2019-0001.

\end{document}